\theoremstyle{plain}
\newtheorem{theorem}{Theorem}[section]
\newtheorem{proposition}[theorem]{Proposition}
\newtheorem{lemma}[theorem]{Lemma}
\newtheorem{corollary}[theorem]{Corollary}
\newtheorem{example}[theorem]{Example}
\theoremstyle{definition}
\newtheorem{definition}{Definition}
\theoremstyle{remark}
\newtheorem{remark}[theorem]{Remark}
\newcommand{\Hom}[3]{\operatorname{Hom}_{#1}(#2,#3)}
\newcommand{\tansor}[3]{{#1}\otimes_{#2}{#3}}
\newcommand{\ext}[4]{\operatorname{Ext}^{#1}_{#2}(#3,#4)}
\newcommand{\cd}[3]{\operatorname{cd}_{#1}(#2,#3)}
\newcommand{\cc}[2]{\operatorname{cd}_{#1}(#2)}
\newcommand{\h}[3]{\operatorname{H}^{#1}_{#2}(#3)}
\newcommand{\gam}[2]{\Gamma_{#1}(#2)}
\newcommand{\gh}[4]{\operatorname{H}^{#1}_{#2}(#3,#4)}
\newcommand{\limext}[6]{\underset{#1}{\varinjlim}\operatorname{Ext}^{#2}_{#3}({#4}/{{#5}^#1#4},#6)}
\newcommand{\ann}{\operatorname{Ann}}
\newcommand\supp {\operatorname{Supp}}
\newcommand\ass {\operatorname{Ass}}
\newcommand\att{\operatorname{Att}}
\newcommand{\fgrad}[3]{\operatorname{f-grad}(#1,#2,#3)}
\newcommand\depth{\operatorname{depth}}
\newcommand{\fdepth}[2]{\operatorname{f-depth}(#1,#2)}
\newcommand{\pd }{\operatorname{proj\,dim}}
\newcommand\im{\operatorname{im}}
\newcommand\fa{\mathfrak a}
\newcommand\fb{\mathfrak b}
\newcommand\fc{\mathfrak c}
\newcommand\fm{\mathfrak m}
\newcommand\fp{\mathfrak p}
\newcommand\fq{\mathfrak q}
\newcommand\fr{\mathfrak r}
\newcommand\N{\mathbb N}
\begin{document}

\title[ Filter  sequence and generalized local cohomology  ]{Filter regular sequences and  generalized local cohomology  modules}%
\author{Ali Fathi}
\address{Department of Mathematics\\ Science and Research Branch\\
Islamic Azad University\\ Tehran, Iran.}
\email{alif1387@gmail.com}
\author{Abolfazl Tehranian}
\address{Department of Mathematics\\ Science and Research Branch\\
Islamic Azad University\\ Tehran, Iran.}
\email{tehranian@srbiau.ac.ir}
\author{Hossein Zakeri}
\address{Faculty of Mathematical Sciences and Computer\\
Tarbiat Moallem University\\ Tehran, Iran. }
\email{zakeri@tmu.ac.ir}
%\thanks{The auther}
\keywords{ generalized local cohomology module, filter regular sequence, Nagel-Schenzel formula,  Artinianness, Attached prime.}
\subjclass[2010]{13D45, 13E10}
%\date{*}%
%\dedicatory{}%
%\commby{*}%
% ----------------------------------------------------------------
\begin{abstract}
Let $\fa$, $\fb$ be ideals of a commutative Noetherian ring $R$ and let $M$, $N$  be finite $R$-modules. The concept of an $\fa$-filter grade of $\fb$ on $M$ is introduced and several characterizations   and properties of this notion are given. Then, using the above characterizations, we obtain some results on  generalized local cohomology modules $\gh i{\fa}MN$. In particular, first we determine the least integer $i$ for which $\gh i{\fa}MN$ is not Artinian. Then  we prove that $\gh i{\fa}MN$  is Artinian for all $i\in\mathbb N_0$ if and only if  $\dim{R}/({\fa+\ann M+\ann N})=0$. Also, we establish the Nagel-Schenzel formula for generalized local cohomology modules. Finally, in a certain case,  the set of  attached primes of $\gh i{\fa}MN$ is determined and a comparison between this set and the set of attached primes of $\h i{\fa}N$ is given.
 \end{abstract}
\maketitle
\setcounter{section}{0}
% ----------------------------------------------------------------
\section{ Introduction}
 Throughout this paper, $R$ is a commutative Noetherian ring with nonzero identity, $\fa$, $\fb$ are ideals of $R$ and $M$, $N$ ,$L$ are finite $R$-modules. We will use $\mathbb N$ (respectively $\mathbb N_0$ )  to denote the set of positive (respectively non-negative) integers.
  The generalized  local cohomology  functor was first introduced, in the local case,   by Herzog  \cite{h} and, in the general case, by Bijan-Zadeh \cite{b}. The  $i$-th  generalized local cohomology  functor  $\gh i{\fa}{\boldsymbol \cdot \,}{\boldsymbol \cdot}$ is defined by $$\gh i{\fa}XY=\limext niRX{\fa}Y$$ for all $R$-modules $X$, $Y$ and $i\in\mathbb N_0$. Clearly, this notion is a natural  generalization of the ordinary local cohomology functor \cite{bs}.

    There is a lot of  current interest in the theory of filter regular sequences in commutative algebra; and, in recent years, there have appeared many papers  concerned with the role of these sequences in the theory of local cohomology. In particular case, when one works on a local ring, the concept of a filter regular sequence has been studied in \cite{sv, t}  and has led to some interesting results. We will denote   the supremum of all numbers $n\in\mathbb N_0$ for which there exists an $\fa$-filter regular $M$-sequence of length  $n$ in $\fb$ by $\fgrad {\fa}{\fb}M$. In a local ring $(R, \fm)$, $\fgrad {\fm}{\fa}M$ is known as $\fdepth {\fa}M$. L\"{u} and Tang \cite {lt} proved that
 $$\fdepth {\fa}M=\inf\{i\in\mathbb N_0 | \dim \ext iR{{R}/{\fa}}M>0\}$$
  and that $\fdepth {\fa}M$ is the least integer $i$ such that $\h i{\fa}M$ is not Artinian. As a theorem, we generalize their results and characterize $\fgrad {\fa}{\fb}M$ to  non local cases as follows.
 \begin{align*}
  \fgrad {\fa}{\fb}M&=\inf \{i\in\mathbb N_0 |\,\supp \ext iR{{R}/{\fb}}M\nsubseteq V(\fa)\}\\
   &= \inf \{i\in\mathbb N_0 |\,\supp \h i{\fb}M\nsubseteq\ V(\fa)\},\\
        \fgrad {\fa}{\fb+\ann N}M &=\inf \{i\in\mathbb N_0 |\,\supp  \gh i{\fb}NM\nsubseteq V(\fa)\},
       \end{align*}
and
\begin{align*}
\sup_{A\in\mathcal M} \fgrad {\bigcap_{\fm\in A} \fm}{&\fa+\ann M}N\\
&=\inf\{i\in\mathbb N_0|\, \gh i{\fa}MN \, \textrm{is not Artinian}\}\\
    &=\inf \{i\in\mathbb N_0|\, \supp \gh i{\fa}MN \nsubseteq \max (R) \}\\
   &=\inf \{i\in\mathbb N_0|\, \supp \gh i{\fa}MN \nsubseteq A \, \textrm{ for all } A\in\mathcal M\}\\
    &=\inf \{i\in\mathbb N_0|\, \dim \ext iR{{M}/{\fa M}}{N}>0\},
        \end{align*}
        where $\mathcal M$ is the set of all finite subsets of $\max(R)$.

As an application of this theorem, we show that,  if $n\in\mathbb N$, then $\gh i{\fa}MN$ is Artinian for all $i<n$ if and only if $\gh i{\fa R_\fp}{M_\fp}{N_\fp}$ is Artinian for all $i<n$ and all prime ideals $\fp$. Also, we prove that
 $\gh i{\fa}MN$  is an Artinian $R$-module for all $i\in\mathbb N_0$ if and only if  $\dim{R}/({\fa+\ann M+\ann N})=0$. In particular,
 $\ext iRMN$ has finite length for all $i\in\mathbb N_0$ if and only if $\dim R/(\ann M+\ann N)=0$.

    Let $x_1,\ldots,x_n$  be an $\fa$-filter regular $N$-sequence in $\fa$. Then the formula
  $$\h i{\fa}N=
\begin{cases}
\h i{(x_1,\ldots,x_n)}N   &\text{if } i<n\,,\\
\h {i-n}{\fa}{\h n{(x_1,\ldots,x_n)}N}  &\text{if } i\geq n\,,
\end{cases}$$
 is known as Nagel-Schenzel formula (see \cite{NS} and \cite{khs}).  We generalize the above formula for the generalized local cohomology modules. Indeed, we prove that:
\begin{enumerate}[{\rm(i)}]
    \item\label{sch1} $\gh i{\fa}MN\cong\gh i{(x_1,\ldots,x_n)}MN$ for all $i<n$;\\
    \item\label{sch2} if $\pd M =d $ and $L$ is projective, then
                $$\gh {i+n}{\fa}{\tansor MRL}N\cong\gh i{\fa}M{\gh n{(x_1,\ldots,x_n)}LN}$$
      for all $i\geq d$.
    \end{enumerate}

 Assume that $\bar{R}={R}/({\fa+\ann M+\ann N})$ and that the ideal $\fr$ is the inverse image of the Jacobson radical of $\bar R$ in $R$. If $\bar R$ is  semi local, then, by using the isomorphisms described in (i) and  Theorem \ref{art}, we prove that
\begin{align*}
   \fgrad {\fr}{\fa+\ann M}N&=\inf\{i\in\mathbb N_0|\, \gh i{\fa}MN \, \textrm{is not Artinian}\}\\
    &=\inf\{i\in\mathbb N_0|\, \gh i{\fa}MN\ncong \gh i{\fr}MN\}.
   \end{align*}

Let $(R, \fm)$ be a local ring and $\dim N=n$. Macdonald and Sharp \cite[Theorem 2.2]{ms} show that
$$\att\h n{\fm}N=\{\fp\in\ass N | \dim  R/{\fp}=n \}.$$
As an extension of this result, Dibaei and Yassemi \cite[Theorem A]{dy} proved
$$\att\h n{\fa}N=\{\fp\in\ass N | \operatorname {cd}_{\fa}({R/{\fp}})=n \},$$
where $\operatorname{cd}_{\fa}(M)$ is the greatest  integer $i$ such that $\h i{\fa}M\neq0$. Finally, if $d=\pd M<\infty$, then Gu and Chu \cite[Theorem 2.3]{gc} proved that $\gh{n+d}{\fa}MN$ is Artinian and
 $$\att\gh{n+d}{\fa}MN=\{\fp\in\ass  N | \cd{\fa}M{ R/{\fp}}=n+d\},$$
 where, for an $R$-module $Y$, $\cd{\fa}MY$ is the greatest integer $i$ such that $\gh i{\fa}MY\neq 0$. Notice that $\cd {\fa}MN\leq d+n$ \cite[Lemma 5.1]{b}.  We prove the above result in general case where $R$ is not necessarily  local.
As a corollary we deduce that $\att \gh {n+d}{\fa}MN\subseteq\att \h n{\fa}N$.  Also,we give an example to show that this inclusion  may be strict.
Indeed, our example not only show that the Theorem 2.1 of \cite{ma2} is not true, but it also rejects all of the following conclusions in \cite{ma2}.

  Finally, Let  $\pd M=d<\infty$ and $\dim N=n<\infty$ and $\fb=Ann \h {n}{\fa}N$. We prove that, if $R/\fb$ is a complete semilocal ring, then
$$\att \gh {n+d}{\fa}MN=\supp  {\ext dRMR}\cap\att\h n{\fa}N.$$
In particular, if in addition,  $\pd_{R_\fp}M_\fp=\pd M$ for all $\fp\in\supp M$, then
$$\att \gh {n+d}{\fa}MN=\supp  M\cap\att\h n{\fa}N.$$
% ----------------------------------------------------------------
\section{ Filter regular sequences}
 We say that a sequence $x_1,\ldots,x_n$ of elements of $R$ is an $\fa$-filter regular $M$-sequence, if $x_i\notin \fp$ for all $\fp\in\ass {M}/{(x_1,\ldots,x_{i-1})M}\setminus V(\fa)$ and  for all $i=1,\ldots,n$. In addition,  if $x_1,\ldots,x_n$ belong to $ \fb$, then we say that  $x_1,\ldots,x_n$  is an $\fa$-filter regular $M$-sequence  in $\fb$. Note that $x_1,\ldots,x_n$ is  an $R$-filter regular $M$-sequence if and only if it is a weak $M$-sequence in the sense of \cite[Definition 1.1.1]{bh}.

Some parts of the next elementary proposition are included in \cite[ Proposition 2.2]{NS} in the case where $(R, \fm)$ is local and $\fa=\fm$.
\begin{proposition}\label{kh1}
 Let $x_1,\ldots,x_n$ be a sequence of elements of $R$ and $n\in\mathbb N$.Then the following statements are equivalent.
  \begin{enumerate}[{\rm(i)}]\label{filter}
    \item $x_1,\ldots,x_n$ is an $\fa$-filter regular $M$-sequence.\label{filteri}
    \item $\supp ({(x_1,\ldots,x_{i-1})M:_Mx_i})/{(x_1,\ldots,x_{i-1})M}\subseteq V(\fa)$ for all $i=1,\ldots,n$.\label{filterii}
    \item ${x_1}/{1},\ldots,{x_n}/{1}$ is a weak $M_\fp$-sequence  for all $\fp\in\supp  M\setminus V(\fa)$.\label{filteriii}
     \item $x_1^{\alpha_1},\ldots,x_n^{\alpha_n}$ is an $\fa$-filter regular $M$-sequence for all  positive integers $\alpha_1,\ldots,\alpha_n $.\label{filteriv}
     \item $x_i$  is a weak  $({{M}/{(x_1,\ldots,x_{i-1})M}})/{\gam {\fa}{{M}/{(x_1,\ldots,x_{i-1})M}}}$-sequence for all $i=1,\ldots,n$.\label{filterv}
     \item $(x_1,\ldots,x_{i-1})M:_M {x_i}\subseteq (x_1,\ldots,x_{i-1})M:_M \langle\fa\rangle$ for all $i=1,\ldots,n$, where
     $N :_M \langle\fa\rangle=\{x\in M|\, {\fa}^mx\subseteq N\, \textrm{for some}\,\, m\in\mathbb N \}$ for any submodule $N$ of $M$.\label{filtervi}
  \end{enumerate}
  \end{proposition}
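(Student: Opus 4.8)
The plan is to fix an index $i\in\{1,\ldots,n\}$, write $\bar M:=M/(x_1,\ldots,x_{i-1})M$, and reinterpret each of the six conditions ``at step $i$'' as a statement about $\bar M$, the element $x_i$, and the colon submodule $C:=0:_{\bar M}x_i$ of $\bar M$ (whose preimage under the canonical projection $\pi\colon M\to\bar M$ is $(x_1,\ldots,x_{i-1})M:_Mx_i$). Three elementary facts about a finite $R$-module $L$ do the bookkeeping: (a) $\supp L\subseteq V(\fa)\iff\ass L\subseteq V(\fa)\iff\fa^kL=0$ for some $k\in\N$; (b) $\ass(0:_Lx)=\{\fp\in\ass L:x\in\fp\}$; and (c) $\ass(L/\gam{\fa}L)=\ass L\setminus V(\fa)$. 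Fact (c) is standard; if one wants it from scratch: $\ass\gam{\fa}L=\ass L\cap V(\fa)$, no associated prime of $L/\gam{\fa}L$ can contain $\fa$ (a cyclic submodule witnessing it would be $\fa$-torsion, hence lie in $\gam{\fa}{L/\gam{\fa}L}=0$), while if $\fp\in\ass L\setminus V(\fa)$ one picks $a\in\fa\setminus\fp$ and $t$ with $\fa^t(Rm\cap\gam{\fa}L)=0$ for a suitable generator $m$, and checks $\ann(a^tm)=\fp$; together with the sequence $0\to\gam{\fa}L\to L\to L/\gam{\fa}L\to 0$ this yields (c). (Several of the equivalences below are, as noted in the paper, the non-local analogues of parts of \cite[Proposition 2.2]{NS}.)

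Granting (a)--(c), the equivalences involving (i) are routine. For (i)$\iff$(ii): condition (i) at step $i$ says $x_i\notin\fp$ for every $\fp\in\ass\bar M\setminus V(\fa)$, that is, $\{\fp\in\ass\bar M:x_i\in\fp\}\subseteq V(\fa)$; by (b) this set is $\ass C$, and by (a) the containment is equivalent to $\supp C\subseteq V(\fa)$, which is exactly (ii). For (ii)$\iff$(vi): under $\pi$ the preimage of $\gam{\fa}{\bar M}$ is precisely $(x_1,\ldots,x_{i-1})M:_M\langle\fa\rangle$ (since $\fa^m y\subseteq(x_1,\ldots,x_{i-1})M\iff\fa^m\pi(y)=0$), so (vi) at step $i$ reads $C\subseteq\gam{\fa}{\bar M}$; applying (a) to the submodule $C$ of $\bar M$ (using $\fa^kC=0\iff C\subseteq\gam{\fa}{\bar M}$) turns this into (ii). For (i)$\iff$(v): condition (v) at step $i$ says $x_i$ is a non-zerodivisor on $\bar M/\gam{\fa}{\bar M}$, i.e.\ $x_i\notin\fp$ for all $\fp\in\ass(\bar M/\gam{\fa}{\bar M})=\ass\bar M\setminus V(\fa)$ by (c), which is (i). For (i)$\iff$(iii): since localization is exact, $(\bar M)_\fp=M_\fp/(x_1,\ldots,x_{i-1})M_\fp$, and $x_i/1$ is a non-zerodivisor on it iff $x_i\notin\fq$ for every $\fq\in\ass\bar M$ with $\fq\subseteq\fp$; hence (iii) asserts precisely that $x_i\notin\fq$ whenever $\fq\in\ass\bar M$, $\fq\subseteq\fp$, and $\fp\in\supp M\setminus V(\fa)$. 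If (i) holds and $\fq\subseteq\fp\notin V(\fa)$ then $\fa\nsubseteq\fq$, so $x_i\notin\fq$; conversely, since $\ass\bar M\subseteq\supp M$, taking $\fp=\fq$ in (iii) for any $\fq\in\ass\bar M\setminus V(\fa)$ gives $x_i\notin\fq$, i.e.\ (i).

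The one place requiring more than bookkeeping is (iv), because $M/(x_1^{\alpha_1},\ldots,x_{i-1}^{\alpha_{i-1}})M$ is not $M/(x_1,\ldots,x_{i-1})M$, so the sequences $x_1,\ldots,x_n$ and $x_1^{\alpha_1},\ldots,x_n^{\alpha_n}$ cannot be compared coordinatewise over $R$. The direction (iv)$\Rightarrow$(i) is the case $\alpha_1=\cdots=\alpha_n=1$. For (i)$\Rightarrow$(iv) I route through (iii): by the classical fact that powers of a weak $W$-sequence again form a weak $W$-sequence (taken with $W=M_\fp$; proved by induction, using that for a non-zerodivisor $y$ on $W$ the module $W/y^\alpha W$ admits a finite filtration with all quotients isomorphic to $W/yW$, and that weakness of a sequence passes across short exact sequences), the statement ``$x_1/1,\ldots,x_n/1$ is a weak $M_\fp$-sequence'' transfers to ``$x_1^{\alpha_1}/1,\ldots,x_n^{\alpha_n}/1$ is a weak $M_\fp$-sequence'', and since the index set $\supp M\setminus V(\fa)$ in (iii) is the same for both sequences, (iii) for $x_1,\ldots,x_n$ gives (iii), hence (i), for $x_1^{\alpha_1},\ldots,x_n^{\alpha_n}$, which is (iv). Thus the main (mild) obstacle is exactly this stability of filter regularity under taking powers; everything else is (a), (b), (c) together with keeping careful track of ``step $i$''.
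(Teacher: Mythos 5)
The paper itself offers no proof of this proposition: it is stated as elementary, with only a pointer to the local case ($\fa=\fm$) in \cite[Proposition~2.2]{NS}, so there is nothing of the authors' to compare your argument against. Judged on its own, your proof is correct and complete in structure: the stepwise reduction to $\bar M=M/(x_1,\ldots,x_{i-1})M$ and $C=0:_{\bar M}x_i$, facts (a)--(c), the translation of (vi) as $C\subseteq\gam{\fa}{\bar M}$, the localization bookkeeping for (i)$\Leftrightarrow$(iii) (with the converse obtained by taking $\fp=\fq$ for $\fq\in\ass\bar M\setminus V(\fa)$), and the route (i)$\Rightarrow$(iii)$\Rightarrow$(iv) via the classical stability of weak regular sequences under taking powers, which is exactly the right way to handle the only non-formal implication.

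One small caveat, which I would not call a genuine gap because you explicitly invoke the fact as standard: your from-scratch sketch of (c) does not actually cover the inclusion $\ass(L/\gam{\fa}{L})\subseteq\ass L$. The $a^tm$ computation (with $\fp\in\ass L\setminus V(\fa)$) proves $\ass L\setminus V(\fa)\subseteq\ass(L/\gam{\fa}{L})$, and the short exact sequence only yields $\ass L\subseteq\ass\gam{\fa}{L}\cup\ass(L/\gam{\fa}{L})$, which points the wrong way; yet the omitted inclusion is precisely what (i)$\Rightarrow$(v) needs, since there you must know that every $\fq\in\ass(\bar M/\gam{\fa}{\bar M})$ lies in $\ass\bar M\setminus V(\fa)$. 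It is repaired by the same trick applied to a lift: if $\fq=\ann\bar m$ with $\bar m\in L/\gam{\fa}{L}$, pick $a\in\fa\setminus\fq$ and $t$ with $\fa^t\fq m=0$; then $\ann_R(a^tm)=\fq$, so $\fq\in\ass L$. Alternatively, (i)$\Rightarrow$(v) can bypass (c) altogether using (ii): if $x_iy\in\gam{\fa}{\bar M}$ then $\fa^ky\subseteq 0:_{\bar M}x_i$ for some $k$, and (ii) gives $\fa^l(0:_{\bar M}x_i)=0$, so $y\in\gam{\fa}{\bar M}$. Since (c) is in any case a recorded standard fact (see, e.g., \cite{bs}), this only concerns the optional sketch, not the validity of your proof.
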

%   $\ass M\setminus V(\fa)=\ass {M}/{\gam {\fa}M}$,  (\ref{filteri}) and (\ref{filterv}) are equivalent.
%   Now we prove (\ref{filteri}),  (\ref{filterii}) and (\ref{filtervi}) are equivalent. For an element $x$ of $R$, $x\notin\bigcup_{\fp\in\ass M\setminus V(\fa)} \fp$ if and only if $\ass  (0:_Mx)=V(Rx)\cap\ass M\subseteq V(\fa)$ or  equivalently  $\supp (0:_Mx)\subseteq V(\fa)$.
 %  %On the other hand $\supp  0:_Mx\subseteq V(\fa)$ if and only if ${\fa}^t(0:_Mx)=0$ for some $t\in \mathbb N$.
 %   It therefore follows that  (\ref{filteri}),  (\ref{filterii}) and (\ref{filtervi}) are equivalent.
%% Next we prove  (\ref{filteriii}) and (\ref{filterii}) are equivalent. Let $x\in R$. Then $ x/1$ is a nonzero divisor on $M_\fp$ for all $\fp\in\supp M\setminus V(\fa)$ if and only if $(0:_Mx)_\fp=0$ for all $\fp\in\supp  M\setminus V(\fa)$ or equivalently $\supp  (0:_Mx)\subseteq V(\fa)$. Therefore (\ref{filterii})$\Leftrightarrow$(\ref{filteriii}).
%  Finally, using \cite[Exercise 1.1.10]{bh} and (\ref{filteri})$\Leftrightarrow$(\ref{filteriii}), it is clear that  (\ref{filteri})$\Leftrightarrow$(\ref{filteriv}).
%\end{proof}

It is clear from definition, that, for a given $n\in \mathbb N$, one can find an $\fa$-filter regular $M$-sequence of length $n$.
 The following theorem characterizes the existence of an $\fa$-filter regular $M$-sequence  of length $n$ in $\fb$.

\begin{theorem}\label{filter1}Let $n\in\mathbb{N}$. Then the following statements  are equivalent.

    \begin{enumerate}[{\rm(i)}]
      \item $\fb$ contains an $\fa$-filter regular $M$-sequence of length $n$.
      \item Any $\fa$-filter regular $M$-sequence  in $\fb$  of length  less  than $n$ can be extended to an $\fa$-filter regular $M$-sequence  of length $n$ in $\fb$.
      \item $\supp \ext iR{{R}/{\fb}}M\subseteq V(\fa)$ for all $i<n$.
      \item If $\supp  N=V(\fb)$, then $\supp \ext iRNM\subseteq V(\fa)$ for all $i<n$.
      \item $\supp \h i{\fb}M\subseteq V(\fa)$ for all $i<n$.
      \item If  $\ann N\subseteq\fb$, then $\supp \gh i{\fb}NM\subseteq V(\fa)$ for all $i<n$.
    \end{enumerate}
    \begin{proof}
     The implications (ii)$\Rightarrow$ (i), (iv)$\Rightarrow$ (iii) and (vi)$\Rightarrow$ (v) are clear.

    (i)$\Rightarrow$ (ii). Assume the contrary  that $x_1,\ldots,x_t$ is an $\fa$-filter regular $M$-sequence  in $\fb$ such that $t<n$ and that it  can not be extended  to an $\fa$-filter regular $M$-sequence  of length $n$ in $\fb$.  Then $\fb\subseteq \fp$ for some $\fp \in \ass   {M}/{(x_1,\ldots,x_t)M}\setminus V(\fa)$. So that $\fb R_\fp\subseteq \fp R_\fp\in \ass _{R_\fp} {M_\fp}/{({x_1}/{1},\ldots,{x_t}/{1})M_\fp}$ . It follows that ${x_1}/{1},\ldots,{x_t}/{1}$ is a maximal $M_\fp$-sequence in $\fb{R_\fp}$, which  is a contradiction in view of the hypothesis, Proposition \ref{filter} and \cite[Theorem 1.2.5]{bh}.

         (i)$\Rightarrow$  (iv) Suppose that $x_1,\ldots,x_n$ is an $\fa$-filter regular $M$-sequence in $\fb$. Let $t\in \mathbb N$  be such that $x_i^t\in\ann N$ for all $i=1,\ldots,n$. By Proposition  \ref{kh1}, for any $\fp\in\supp  M\setminus V(\fa)$, ${x_1^t}/{1},\ldots,{x_n^t}/{1}$ is a weak $M_\fp$-sequence in $\ann_{R_\fp}N_\fp$. So that, for all $i<n$, we have
        $\ext i{R_\fp}{N_\fp}{M_\fp}=0.$
        Therefore (\ref{filteriv}) holds.

    (i)$\Rightarrow$(vi) Suppose that $x_1,\ldots,x_n$ is an $\fa$-filter regular $M$-sequence in $\fb$. For any $\fp\in\supp  M\setminus V(\fa)$, ${x_1}/{1},\ldots,{x_n}/{1}$ is a weak $M_\fp$-sequence in $\fb R_\fp$. So that, by \cite[Proposition 5.5]{b}, $\gh i{\fb R_\fp}{N_\fp}{M_\fp}=0$
      for all $i<n$. This proves the implication (i)$\Rightarrow$(vi).

  Next we prove  the implications (iii)$\Rightarrow$(i) and (v)$\Rightarrow$(i)  by induction on $n$. Let $n=1$. In either cases  $\supp \Hom  R{{R}/{\fb}}M\subseteq V(\fa)$. Therefore (i) holds. Suppose that, for all $i\in\mathbb N_0$, $T^i(\boldsymbol{\cdot})$ is either $\ext iR{{R}/{\fb}}{\boldsymbol\cdot}$ or $\h i{\fb}{\boldsymbol\cdot}$. Assume that $n>1$ and $\supp T^i(M)\subseteq V(\fa)$ for all $i<n$. Then $\fb$ contains   an $\fa$-filter regular $M$-sequence, say $x_1$.  The exact sequences
  $$0\longrightarrow 0:_M{x_1}\longrightarrow M\stackrel{x_1}{\longrightarrow} x_1M\longrightarrow0$$
  and  $$0\longrightarrow{x_1}M\longrightarrow M\longrightarrow{M}/{x_1M} \longrightarrow0$$
  induce the long exact sequences
  $$\cdots\longrightarrow T^i (0:_M{x_1})\longrightarrow T^i(M)\longrightarrow T^i (x_1M)\longrightarrow T^{i+1} (0:_M{x_1})\longrightarrow\cdots$$
  and
  $$\cdots\longrightarrow T^i (x_1M)\longrightarrow T^i (M)\longrightarrow T^i ({M}/{x_1M})\longrightarrow T^{i+1} (x_1M)\longrightarrow\cdots.$$
  Since $\supp   0:_M{x_1}\subseteq V(\fa)$, by  Proposition \ref{filter}, it follows that $\supp   T^i ( 0:_M{x_1})\subseteq V(\fa)$ for all $i\in\N_0$.
   Therefore, using the above long exact sequences, we have
   $\supp  T^i ({{M}/{x_1M}})\subseteq V(\fa)$ for all $i<n-1$. Hence, by  inductive hypothesis,  $\fb$ contains an $\fa$-filter regular ${M}/{x_1M}$-sequence  of length $n-1$ such as $x_2,\ldots,x_n$. This completes the inductive step, since $x_1,\ldots,x_n$ is an $\fa$-filter regular $M$-sequence in $\fb$.
    \end{proof}
\end{theorem}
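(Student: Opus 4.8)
The plan is to route every equivalence through statement (i). Three implications are immediate: (ii)$\Rightarrow$(i) is trivial (the empty sequence has length $<n$), while (iv)$\Rightarrow$(iii) and (vi)$\Rightarrow$(v) come from specializing $N$: take $N=R/\fb$ (so $\supp N=V(\fb)$ and $\ext iRNM=\ext iR{R/\fb}M$) for the former, and $N=R$ (so $\ann N=0\subseteq\fb$ and $\gh i{\fb}NM=\h i{\fb}M$) for the latter. What remains is the ``forward'' family (i)$\Rightarrow$(ii), (i)$\Rightarrow$(iv), (i)$\Rightarrow$(vi) and the ``backward'' family (iii)$\Rightarrow$(i), (v)$\Rightarrow$(i); I expect the backward ones to carry the real content.

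For the forward direction the key tool is Proposition \ref{kh1}: an $\fa$-filter regular $M$-sequence $x_1,\ldots,x_n$ in $\fb$ becomes, after localizing at any $\fp\in\supp M\setminus V(\fa)$, a weak $M_\fp$-sequence in $\fb R_\fp$, and (after raising the $x_j$ to suitable powers, still legitimate by Proposition \ref{kh1}) a weak $M_\fp$-sequence inside $\ann_{R_\fp}N_\fp$ whenever $\supp N=V(\fb)$, since then $\fb^k\subseteq\ann N$ for some $k$. Then (i)$\Rightarrow$(ii): if a filter regular sequence $x_1,\ldots,x_t$ in $\fb$ with $t<n$ could not be extended, then $\fb\subseteq\fp$ for some $\fp\in\ass(M/(x_1,\ldots,x_t)M)\setminus V(\fa)$; localizing at $\fp$ exhibits $x_1/1,\ldots,x_t/1$ as a maximal $M_\fp$-sequence in $\fb R_\fp$ of length $t$, which contradicts the constancy of maximal lengths in \cite[Theorem 1.2.5]{bh} against the length-$n$ weak $M_\fp$-sequence supplied by (i). For (i)$\Rightarrow$(iv) and (i)$\Rightarrow$(vi) one uses that $\ext iRNM$ (for finitely generated $N$) and $\gh i{\fb}NM$ commute with localization, so it suffices to see that these vanish at each $\fp\in\supp M\setminus V(\fa)$ for $i<n$: this is the Rees-type vanishing for a weak $M_\fp$-sequence lying inside an annihilator ideal, respectively \cite[Proposition 5.5]{b} for a weak $M_\fp$-sequence in $\fb R_\fp$. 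The degenerate cases $M_\fp=0$, $N_\fp=0$, or $\fb R_\fp=R_\fp$ are harmless, since the relevant modules then vanish and the support containments hold vacuously.

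For the backward direction I would induct on $n$, treating $\ext iR{R/\fb}{-}$ and $\h i{\fb}{-}$ uniformly by writing $T^i(-)$ for either functor. When $n=1$, both hypotheses read $\supp\Hom R{R/\fb}M\subseteq V(\fa)$; if $\fb\subseteq\fp$ for some $\fp\in\ass M$, then a cyclic submodule of $M$ with annihilator $\fp$ lies in $\Hom R{R/\fb}M=(0:_M\fb)$, forcing $\fp\in V(\fa)$; hence no member of the finite set $\ass M\setminus V(\fa)$ contains $\fb$, and prime avoidance produces $x_1\in\fb$ outside all of them---an $\fa$-filter regular $M$-sequence of length $1$. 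For $n>1$, choose such an $x_1$; by Proposition \ref{kh1} we have $\supp(0:_M x_1)\subseteq V(\fa)$, hence $\supp T^i(0:_M x_1)\subseteq V(\fa)$ for every $i$ (a module supported in $V(\fa)$ has each $T^i$ supported there, as one checks after localizing at $\fp\notin V(\fa)$). Splitting multiplication by $x_1$ into the short exact sequences $0\to 0:_M x_1\to M\to x_1M\to 0$ and $0\to x_1M\to M\to M/x_1M\to 0$ and chasing the long exact sequences of $T^{\bullet}$ yields $\supp T^i(M/x_1M)\subseteq V(\fa)$ for all $i<n-1$; the inductive hypothesis then gives an $\fa$-filter regular $M/x_1M$-sequence $x_2,\ldots,x_n$ in $\fb$ of length $n-1$, and $x_1,\ldots,x_n$ is the desired sequence. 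The one genuinely delicate point is the index bookkeeping: the sequence involving $0:_M x_1$ only pushes the hypothesis down to degrees $<n-1$ (not $<n$), which is exactly what the induction consumes, and one must verify the degenerate vanishing cases above so that none of the localized grade statements are invoked outside their domain of validity.
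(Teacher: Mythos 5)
Your proposal is correct and follows essentially the same route as the paper: the same localization-plus-maximal-sequence contradiction for (i)$\Rightarrow$(ii) via \cite[Theorem 1.2.5]{bh}, the same power trick and Rees/\cite[Proposition 5.5]{b} vanishing for (i)$\Rightarrow$(iv) and (i)$\Rightarrow$(vi), and the same induction with the uniform functor $T^i$ and the two short exact sequences for (iii)$\Rightarrow$(i) and (v)$\Rightarrow$(i). You merely make explicit some details the paper leaves implicit (prime avoidance in the base case, preservation of support in $V(\fa)$ under $T^i$, degenerate localizations); the only slip is cosmetic --- it is the second exact sequence (through $x_1M$), not the one involving $0:_M x_1$, that costs the drop from $i<n$ to $i<n-1$.
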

\begin{remark} \label{propinf}
One may use Theorem \ref{filter1} (iii)$\Rightarrow$(ii) and Proposition \ref{kh1} to see that $\supp {M}/{\fb M}\subseteq V(\fa)$ if and only if, for each $n\in\mathbb N$, there exists an $\fa$-filter regular $M$-sequence  of  length $n$ in $\fb$. Moreover, if $\supp {M}/{\fb M}\nsubseteq V(\fa)$,then it follows from Theorem \ref{filter1} that any two maximal $\fa$-filter regular $M$-sequences   in $\fb$ have the same length. Therefore, we may give the following.
\end{remark}
%\begin{proposition}\label{propinf}
%    There exists an $\fa$-filter regular $M$-sequence  of infinite length in $\fb$ if and only if $\supp {M}/{\fb M}\subseteq V(\fa)$.
%\end{proposition}
%\begin{proof
%    If $\supp {M}/{\fb M}\subseteq V(\fa)$, then,  in view of  Theorem \ref{filter1} (iii)$\Rightarrow$(ii), we can construct an  $\fa$-filter regular $M$-sequence  of infinite length in $\fb$.
 %    Conversely, let $x_1,x_2,\ldots$ be an $\fa$-filter regular $M$-sequence  in $\fb$. Proposition \ref{kh1} yields that
 %    $$\supp ({(x_1,\ldots,x_{i-1})M:_Mx_i})/{(x_1,\ldots,x_{i-1})M}\subseteq V(\fa)$$ for all $i\in \mathbb N$. If $\supp {M}/{\fb M}\nsubseteq V(\fa)$, then $(x_1,\ldots,x_{i-1})M\neq(x_1,\ldots,x_i)M$ for all $i\in\mathbb N$, which is not possible.
%\end{proof}
%  Theorem \ref{filter1} and Remark \ref{propinf} enable us to give the following.
 \begin{definition}\label{filter2}Let $\supp {M}/{\fb M}\nsubseteq V(\fa)$.  Then the common length of all maximal $\fa$-filter regular $M$-sequences  in $\fb$ is denoted by $\fgrad {\fa}{\fb}M$ and is called the $\fa$-filter grade of $\fb$ on $M$. We  set $\fgrad{\fa}{\fb}M=\infty$ whenever $\supp {M}/{\fb M}\subseteq V(\fa)$.
\end{definition}
     Let $(R,\fm)$ be a local ring. Then the  $\fm$-filter grade of $\fb$ on $M$ is called the filter depth of $\fb$ on $M$ and is denoted by $\fdepth {\fb}M$. Notice that,  by Remark \ref{propinf}, $\fdepth {\fb}M<\infty$ if and only if $M/{\fb M}$ has finite length.

\begin{remark}\label{remark}%\begin{enumerate}[{\rm(i)}]
            %  \item
             The following equalities follows immediately from  Theorem \ref{filter1}.
\begin{align*}
\fgrad {\fa}{\ann N}M&= \inf \{i\in\mathbb N_0 |\,\supp \ext iRNM\nsubseteq\ V(\fa)\}, \\
         \fgrad {\fa}{\fb+\ann N}M &=\inf \{i\in\mathbb N_0 |\,\supp  \gh i{\fb}NM\nsubseteq V(\fa)\}.
       \end{align*}
       In particular,
       \begin{align*}
        \fgrad {\fa}{\fb}M&=\inf \{i\in\mathbb N_0 |\,\supp \ext iR{{R}/{\fb}}M\nsubseteq V(\fa)\}\\
   &= \inf \{i\in\mathbb N_0 |\,\supp \h i{\fb}M\nsubseteq\ V(\fa)\}.
           \end{align*}
          Suppose  in addition that $(R, \fm)$ is  local. Then
           \begin{align*}
            \fdepth {\fb}M&=\inf\{i\in\N_0|\, \dim \ext iR{R/{\fb}}M>0\}\\
            &=\inf\{i\in\N_0|\, \supp\h i{\fb}M\nsubseteq\{\fm\}\}.
           \end{align*}
            %  \item  Since $\fgrad R{\fb}M=\grad {\fb}M$, we have  the well known properties
                 %                  $$ \grad {\fb}M=\inf\{i\in\mathbb N_0 |\,\ext iR{{R}/{\fb}}M\neq 0\}
                %  =\inf\{i\in\mathbb N_0 |\,\h i{\fb}M\neq 0\}$$
                             %   and
                                  % $$\grad {\fb+\ann N}M= \inf\{i\in\mathbb N_0 |\,\gh i{\fb}NM\neq 0\}$$
%which are stated in \cite[Theorem 6.2.7]{bs} and \cite[Proposition 5.5]{b} respectively.
        %     \end{enumerate}
 \end{remark}
%-------------------------------------------------------------------------------------------------------
\section{A generalization of Nagel-Schenzel formula}
 Let $x_1,\ldots,x_n$ be an $\fa$-filter regular $M$-sequence in $\fa$.  Then, by  \cite[Proposition 1.2]{khs},
$$\h i{\fa}M=
\begin{cases}
\h i{(x_1,\ldots,x_n)}M   &\text{if } i<n\,,\\
\h {i-n}{\fa}{\h n{(x_1,\ldots,x_n)}M}  &\text{if } i\geq n\,.
\end{cases}$$
 This formula was first obtained by Nagel and Schenzel, in \cite[ Lemma 3.4]{NS}, in the case where $(R,\fm)$ is a local ring and $\fa=\fm$. Afterwards Khashyarmanesh, Yassi and Abbasi  \cite[Theorem 3.2]{kya} and  Mafi  \cite[Lemma 2.8]{ma1} generalized the second part of this formula for the generalized local cohomology modules as follows.

 Suppose that  $M$ has   finite projective dimension $d$ and that $x_1,\ldots,x_n$ is an $\fa$-filter regular $N$-sequence in $\fa$. Then
   $$\gh {i+n}{\fa}MN\cong\gh i{\fa}M{\h n{(x_1,\ldots,x_n)}N}$$ for all $i\geq d$.

  The following Theorem establishes the Nagel-Schenzel  formula for the generalized local cohomology modules. The first part of the following theorem  is needed in the proof of the Corollary \ref{corf1}.
  \begin{theorem}\label{sch}
  Let  $x_1,\ldots,x_n$ be an $\fa$-filter regular $N$-sequence in $\fa$. Then the following statements hold.
  \begin{enumerate}[{\rm(i)}]
    \item\label{sch1} $\gh i{\fa}MN\cong\gh i{(x_1,\ldots,x_n)}MN$ for all $i<n$.\\
    \item\label{sch2} If $\pd M =d <\infty$ and $L$ is projective, then
                $$\gh {i+n}{\fa}{\tansor MRL}N\cong\gh i{\fa}M{\gh n{(x_1,\ldots,x_n)}LN}$$
      for all $i\geq d$.
    \end{enumerate}
  \end{theorem}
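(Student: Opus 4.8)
Write $\fb=(x_1,\ldots,x_n)$; since $x_1,\ldots,x_n\in\fa$ we have $\fb\subseteq\fa$, so $\Gamma_{\fa}\subseteq\Gamma_{\fb}$ as subfunctors of the identity, and hence $\Gamma_{\fa}\Hom RM{-}\subseteq\Gamma_{\fb}\Hom RM{-}$. For part (i) my plan is to fix an injective resolution $N\to I^{\bullet}$ and examine the inclusion of subcomplexes $\Gamma_{\fa}I^{\bullet}\hookrightarrow\Gamma_{\fb}I^{\bullet}$. Each $\Gamma_{\fa}I^{k}$ is an injective module, hence a direct summand of $\Gamma_{\fb}I^{k}$; so the quotient $K^{\bullet}:=\Gamma_{\fb}I^{\bullet}/\Gamma_{\fa}I^{\bullet}$ is again a complex of injective modules and the short exact sequence of complexes $0\to\Gamma_{\fa}I^{\bullet}\to\Gamma_{\fb}I^{\bullet}\to K^{\bullet}\to0$ is split in every degree. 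Its long exact cohomology sequence, together with the ordinary Nagel--Schenzel formula recalled above (which supplies $\h i{\fa}N\xrightarrow{\sim}\h i{\fb}N$ for $i<n$, and at $i=n$ the injectivity of the natural map $\h n{\fa}N\to\h n{\fb}N$, whose image is $\Gamma_{\fa}(\h n{\fb}N)$), forces $\operatorname{H}^{i}(K^{\bullet})=0$ for every $i<n$. Now apply $\Hom RM{-}$ degreewise: being split, the above sequence is preserved, and because $M$ is finite one may identify $\Gamma_{\fc}\Hom RM{I^{\bullet}}$ with $\Hom RM{\Gamma_{\fc}I^{\bullet}}$ for $\fc\in\{\fa,\fb\}$ --- these being complexes that compute $\gh{\bullet}{\fc}MN$. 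We thus obtain a short exact sequence of complexes $0\to\Gamma_{\fa}\Hom RM{I^{\bullet}}\to\Gamma_{\fb}\Hom RM{I^{\bullet}}\to\Hom RM{K^{\bullet}}\to0$ whose long exact sequence reads
$$\cdots\longrightarrow\gh i{\fa}MN\longrightarrow\gh i{\fb}MN\longrightarrow\operatorname{H}^{i}(\Hom RM{K^{\bullet}})\longrightarrow\gh{i+1}{\fa}MN\longrightarrow\cdots.$$
Finally, being a bounded-below complex of injectives, $\Hom RM{K^{\bullet}}$ computes $\mathbf R\Hom RM{K^{\bullet}}$; since $K^{\bullet}$ is acyclic in degrees $<n$ it is quasi-isomorphic to a complex concentrated in degrees $\geq n$, so $\operatorname{H}^{i}(\Hom RM{K^{\bullet}})=0$ for all $i<n$, and the displayed sequence yields $\gh i{\fa}MN\cong\gh i{\fb}MN$ for $i<n$, which is (i).

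For part (ii) the plan is to reduce to the case $L=R$, which is exactly the generalization of the second Nagel--Schenzel formula quoted just before the theorem. As $L$ is a finite projective module, $\Hom RL{-}\cong L^{\vee}\otimes_{R}(-)$ with $L^{\vee}:=\Hom RLR$ again finite projective, so $\Hom RL{-}$ is exact and carries injectives to injectives. Hom--tensor adjunction gives an isomorphism of functors $\Gamma_{\fa}\Hom R{\tansor MRL}{-}\cong(\Gamma_{\fa}\Hom RM{-})\circ\Hom RL{-}$, which, since $\Hom RL{-}$ is exact and preserves injectives, passes to right derived functors and produces natural isomorphisms $\gh i{\fa}{\tansor MRL}N\cong\gh i{\fa}M{\Hom RLN}$ for all $i\in\mathbb N_0$; exactness of $\Hom RL{-}$ likewise gives $\h n{(x_1,\ldots,x_n)}{\Hom RLN}\cong\Hom RL{\h n{(x_1,\ldots,x_n)}N}=\gh n{(x_1,\ldots,x_n)}LN$. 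Moreover, localizing the isomorphism $\Hom RLX\cong L^{\vee}\otimes_{R}X$ shows $\ass\Hom RLX\subseteq\ass X$ for every $R$-module $X$; as $\Hom RL{-}$ commutes with the quotients $N/(x_1,\ldots,x_{j-1})N$, the sequence $x_1,\ldots,x_n$ remains an $\fa$-filter regular $\Hom RLN$-sequence in $\fa$. Applying the known case $L=R$ with $\Hom RLN$ in place of $N$ (legitimate, as $\pd M=d<\infty$) gives $\gh{i+n}{\fa}M{\Hom RLN}\cong\gh i{\fa}M{\h n{(x_1,\ldots,x_n)}{\Hom RLN}}$ for all $i\geq d$, and stringing the three isomorphisms together proves (ii).

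I expect the real work to be in part (i): one must verify carefully that $K^{\bullet}$ is acyclic throughout degrees $<n$ --- which uses not only the isomorphisms furnished by Nagel--Schenzel in degrees $<n$, but also the injectivity of the natural map in degree $n$ --- and justify the pointwise identities $\Gamma_{\fc}\Hom RM{I^{\bullet}}\cong\Hom RM{\Gamma_{\fc}I^{\bullet}}$ (valid because $M$ is finite) as well as the standard fact that a bounded-below complex of injectives computes $\mathbf R\Hom$. Part (ii) is essentially formal once the $L=R$ case is in hand; its only delicate point is the persistence of $\fa$-filter regularity under $\Hom RL{-}$, which rests on the behaviour of associated primes under tensoring with the finite projective module $L^{\vee}$.
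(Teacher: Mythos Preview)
Your proposal is correct, but both parts take a different route from the paper.

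For part (i), the paper works with the \emph{minimal} injective resolution $E^\bullet$ of $N$ and observes directly that filter-regularity forces $\mu_i(\fp,N)=0$ for every $\fp\in V(\fb)\setminus V(\fa)$ and every $i<n$; hence $\Gamma_\fa E^i=\Gamma_\fb E^i$ for $i<n$, and the natural inclusion $\varphi_{M,E^i}\colon\gh 0\fa M{E^i}\to\gh 0{\fb}M{E^i}$ is an isomorphism for $i<n$ and a monomorphism for $i=n$, so a three-term commutative diagram finishes. Your quotient-complex argument is valid, but note that the facts you need---that the \emph{natural} map $\h i\fa N\to\h i\fb N$ is an isomorphism for $i<n$ and injective for $i=n$---are not literally the statement of Nagel--Schenzel (which gives abstract isomorphisms) but rather its proof, namely the Bass-number equality above. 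So you are implicitly using the same key input as the paper; the paper's packaging is shorter because it avoids the detour through $K^\bullet$ and $\mathbf R\Hom$.

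For part (ii), the paper builds a Grothendieck spectral sequence $E_2^{p,q}=\gh p\fa M{\gh q\fb LN}\Rightarrow\gh{p+q}\fa{\tansor MRL}N$ from the composite $\Gamma_\fa\Hom RM{-}\circ\Gamma_\fb\Hom RL{-}$ and shows it collapses on the line $q=n$ for $p+q\geq n+d$. Your reduction to the previously known case $L=R$ via the exact, injective-preserving functor $\Hom RL{-}\cong L^\vee\otimes_R(-)$ is correct and more economical, since that case is already cited in the paper. The only step to write out carefully is the persistence of $\fa$-filter regularity under $\Hom RL{-}$, which you handle correctly through associated primes. The paper's spectral-sequence argument buys self-containedness (it does not appeal to the $L=R$ result), while yours buys brevity.
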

  \begin{proof}
    (i)  Set $\boldsymbol{x}=x_1,\ldots,x_n$. Since $\gam{\fa}N\subseteq\gam{(\boldsymbol{x})}N$ we have a natural monomorphism $\varphi_{M,N}:\gh 0{\fa}MN\rightarrow\gh 0{(\boldsymbol{x})}MN$.
    Now, let ${\mu}_i({\fp}, N)$ be the $i$-th Bass number of $N$ with respect to a prime ideal $\fp$ and let $0\longrightarrow E^0\stackrel{d^0}{\longrightarrow}E^1\stackrel{d^1}{\longrightarrow}E^2\longrightarrow\cdots$ be the minimal injective resolution of $N$. Then,  by Proposition \ref{kh1}, ${\mu}_i({\fp}, N)=0$ for all $\fp\in\supp N\cap V(\boldsymbol{x})\setminus V(\fa)$ and all $i<n$. So          \begin{align*}
          \gam {\fa} {E^i}&=\bigoplus_{\fp\in\supp N\cap V(\fa)}E({R}/{\fp})^{{\mu}_i({\fp}, N)}\\&=\bigoplus_{\fp\in\supp N\cap V(\boldsymbol{x})}E({R}/{\fp})^{{\mu}_i({\fp}, N)}=\gam {(\boldsymbol {x})}{E^i}
\end{align*}
 for all $i<n$. Therefore $\varphi_{M,E^i}$ is an isomorphism for all $i<n$.
       Now let $i<n$. Since $\varphi_{M,{E^{i-1}}}$ and $\varphi_{M,{E^i}}$ are isomorphisms and $\varphi_{M,{E^{i+1}}}$ is a monomorphism, one can use  the following  commutative diagram
\begin{displaymath}
\xymatrix{
\gh 0{\fa}M{E^{i-1}}\ar[r]\ar[d]^{\varphi_{M,{E^{i-1}}}}&
\gh 0{\fa}M{E^{i}}\ar[r]\ar[d]^{\varphi_{M,{E^{i}}}}&
\gh 0{\fa}M{E^{i+1}}\ar[d]^{\varphi_{M,{E^{i+1}}}}\\
\gh 0{(\boldsymbol{x})}M{E^{i-1}}\ar[r]&
\gh 0{(\boldsymbol{x})}M{E^{i}}\ar[r]&
\gh 0{(\boldsymbol{x})}M{E^{i+1}}
\,}
\end{displaymath}
to see that the induced homomorphism
 $$\bar{\varphi}_{M,{E^i}}:\gh i{\fa}MN=\frac{\ker {\gh 0{\fa}M{d^{i}}}}{\im {\gh 0{\fa}M{d^{i-1}}}}\rightarrow\frac{\ker {\gh 0 {(\boldsymbol{x})}M{d^{i}}}}{\im {\gh 0{(\boldsymbol{x})}M{d^{i-1}}}}=\gh i{(\boldsymbol{x})}MN\,,$$
 is an isomorphism.

(ii) Set $F({\boldsymbol{\cdot}})=\gh 0{\fa}M{\boldsymbol{\cdot}}$ and $G(\boldsymbol{\cdot})=\gh 0{(\boldsymbol{x})}L{\boldsymbol{\cdot}}$. Then $F$ and $G$ are  left exact functors and $FG(\boldsymbol{\cdot})\cong\gh 0{\fa}{\tansor MRL}{\boldsymbol{\cdot}}$. Furthermore if $E$ is an injective $R$-module and $\textsf{R}^p F$ ($p\in \mathbb N_0$) is the $p$ -th right derived functor of $F$, then it follows  from  \cite[Lemma 1.1]{yks} and(i) that
\begin{align*}
   \textsf{R}^p F(G(E))&=\gh p{\fa}M{\gh 0{(\boldsymbol{x})}LE}
   \cong\gh p{\fa}M{\gh 0{\fa}LE}\\&\cong \ext pRM{\Hom RL{\gam {\fa}E}}=0
\end{align*}
for all $p\geq1$.
This yields the following spectral sequence
$$E^{p,q}_2=\gh p{\fa}M{\gh q{(\boldsymbol{x})}LN}\underset{p}{\Longrightarrow}\gh {p+q}{\fa}{\tansor MRL}N$$
(see for example \cite[Theorem 11.38]{r} ). Let $t=p+q\geq d+n$. If $q>n$, then $\h q{(\boldsymbol x)}N=0$  by \cite[Corollary 3.3.3]{bs}. Since $L$ is projective, it therefore follows that $\gh q{(\boldsymbol x)}LN=0$.  On the other hand if $q<n$, then $p>d=\pd M$. Hence
\begin{align*}
    E_2^{p,q}=\gh p{\fa}M{\gh q{(\boldsymbol{x})}LN}\cong\gh p{\fa}M{\gh q{\fa}LN}\cong\ext pRM{\gh q{\fa}LN}=0.
\end{align*}
Therefore, for $t\geq n+d$, there is a  collapsing on the line $\fq=n$. Thus, there are isomorphisms
$$\gh {t-n}{\fa}M{\gh n{(\boldsymbol{x})}LN}\cong\gh t{\fa}{\tansor MRL}N$$
for all $t\geq n+d$.
    \end{proof}
  %{  The following corollary is an immediate consequence of Theorem \ref{sch}.
 %\begin{corollary}[see\cite{NS}  Lemma 3.4 and \cite{khs} Proposition 1.2]
 %    Let $\boldsymbol {x}=x_1,\ldots,x_n$ be an $\fa$-filter regular $N$-sequence in $\fa$. Then we have
%$$\h i{\fa}M=
%\begin{cases}
%\h i{(\boldsymbol {x})}M   &\text{if } i<n\,,\\
%\h {i-n}{\fa}{\h n{(\boldsymbol {x})}M}  &\text{if } i\geq n\,,
%\end{cases}$$
 % which is known as Nagel-Schenzel formula.
 % \end{corollary}
 %-------------------------------------------------------------------------------------------------------
\section{ Artinianness of generalized local cohomology modules}
    Let $(R,\fm)$ be a Noetherian local ring. %In Definition \ref{filter2}  we introduce the concept of  $\fdepth {\fa}M$.
    In view of   \cite[Theorem 3.1]{me} and \cite[Theorem 3.10]{lt}, one can see that $\fdepth {\fa}M$ is  the least integer $i$ for which $\h i{\fa}M$ is not Artinian. Also,  as a main result, it was proved in \cite[Theorem 2.2]{ct} that  $\fdepth {\fa+\ann M}N$ is the least integer $i$ such that $\gh i{\fa}MN$ is not Artinian. We use rather a short argument to generalize this to the  case in which $R$ is not necessarily  a local ring. The following lemma is elementary.
\begin{lemma}[\cite{s} Exercise 8.49]\label{lemart}
    Let $X$ be an Artinian $R$-module, then $\ass X=\supp  X$ is a finite subset of $ \max (R)$.
\end{lemma}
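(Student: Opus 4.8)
The statement is a standard fact about Artinian modules over Noetherian rings; my plan is to extract it from two elementary observations, namely that a finitely generated module which happens to be Artinian has finite length, and that the socle of an Artinian module is essential in it.

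First I would dispose of the support. Since $X$ is the direct union of its finitely generated submodules, $\supp X=\bigcup_{x\in X}\supp(Rx)$. Fix $0\neq x\in X$; the cyclic submodule $Rx\cong R/\ann x$ is finitely generated and Artinian, hence of finite length, so the ring $R/\ann x$ is Artinian, and by the structure theorem it is a finite product of Artinian local rings. It follows that $\supp(Rx)=\ass(Rx)$ is a finite set of maximal ideals, and of course $\ass(Rx)\subseteq\ass X$. Letting $x$ range over $X$ gives $\supp X\subseteq\ass X$; together with the automatic inclusion $\ass X\subseteq\supp X$ this yields $\ass X=\supp X$, and the argument just given also shows that this set is contained in $\max(R)$.

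It remains to prove finiteness. The socle $\operatorname{soc}X$ is a submodule of the Artinian module $X$, hence Artinian; being semisimple, it is then a \emph{finite} direct sum of simple modules, so only finitely many maximal ideals occur among the annihilators of its simple summands. On the other hand $\operatorname{soc}X$ is essential in $X$, because every nonzero submodule of the Artinian module $X$ contains a simple submodule; hence every associated prime of $X$ is an associated prime of $\operatorname{soc}X$, and by the previous paragraph such a prime is maximal, so it is the annihilator of one of the finitely many simple summands of $\operatorname{soc}X$. Therefore $\ass X$ is finite, and the lemma follows.

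The places where one must be a little careful — and where the Noetherian hypothesis on $R$ is actually used — are the fact that a finitely generated Artinian $R$-module has finite length and the structure theorem for the Artinian quotient ring $R/\ann x$; both are completely standard, so I do not anticipate any real obstacle.
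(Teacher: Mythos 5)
Your argument is correct. Note, though, that the paper offers no proof of this lemma at all: it is quoted as Exercise 8.49 of Sharp's \emph{Steps in commutative algebra}, so there is no internal argument to compare yours against. Your two-step proof is a complete and standard substitute: the reduction to cyclic submodules (each $Rx\cong R/\ann x$ is finitely generated and Artinian, hence of finite length, so $R/\ann x$ is an Artinian ring and $\supp(Rx)=\ass(Rx)$ is a finite set of maximal ideals) correctly yields $\supp X\subseteq \ass X\subseteq\max(R)$, hence $\ass X=\supp X\subseteq\max(R)$; and the socle argument (the socle of an Artinian module is an essential, finitely-cogenerated semisimple submodule, so it is a finite direct sum of simples, and essentiality gives $\ass X=\ass(\operatorname{soc}X)$, using that a nonzero element of a copy of $R/\fp$ has annihilator exactly $\fp$) correctly gives finiteness. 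The only stylistic remark is that the Noetherian hypothesis is not really needed where you flag it: a cyclic Artinian module $R/\ann x$ is an Artinian ring in its own right, hence Noetherian by Akizuki--Hopkins, so the finite-length claim holds over any commutative ring; but invoking Noetherianness as you do is certainly harmless in the context of this paper.
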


\begin{theorem}\label{art} Let $\mathcal M$ be the set of all finite subsets of $\max(R)$. Then
   \begin{align*}
\sup_{A\in\mathcal M} \fgrad {\cap_{\fm\in A} \fm}{&\fa+\ann M}N\\
&=\inf\{i\in\mathbb N_0|\, \gh i{\fa}MN \, \textrm{is not Artinian}\}\\
    &=\inf \{i\in\mathbb N_0|\, \supp \gh i{\fa}MN \nsubseteq \max (R) \}\\
   &=\inf \{i\in\mathbb N_0|\, \supp \gh i{\fa}MN \nsubseteq A \, \textrm{ for all } A\in\mathcal M\}
        \end{align*}
\end{theorem}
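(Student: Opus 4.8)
Throughout write $H^{i}=\gh i{\fa}MN$; let $s$ be the left-hand supremum and let $a$, $b$, $c$ be the three infima on the right, in the order listed. The plan is to prove $s=c$ by a soft combinatorial argument, then $a\le c\le b$ and $a\le b$ directly from Lemma \ref{lemart}, and finally the one substantive inequality $b\le a$; together these force $a=b=c=s$. For the first step, note that $V(\bigcap_{\fm\in A}\fm)=A$ for $A\in\mathcal M$, so Remark \ref{remark} (used with $N$, $M$ and $\bigcap_{\fm\in A}\fm$ playing the roles of $M$, $N$ and $\fa$) gives $\fgrad{\bigcap_{\fm\in A}\fm}{\fa+\ann M}{N}=\inf\{i\mid\supp H^{i}\nsubseteq A\}$. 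Hence $s=\sup_{A\in\mathcal M}\inf\{i\mid\supp H^{i}\nsubseteq A\}$, and I would check this equals $c=\inf\{i\mid\supp H^{i}\text{ is not a finite subset of }\max(R)\}$: if $j<c$ then $\supp H^{j}\in\mathcal M$, so taking $A_{i}=\bigcup_{j\le i}\supp H^{j}\in\mathcal M$ shows $\inf\{j\mid\supp H^{j}\nsubseteq A_{i}\}>i$, giving $s\ge c$ as $i\to c$, while $\supp H^{c}\nsubseteq A$ for every $A\in\mathcal M$ gives $s\le c$; the degenerate cases $c=\infty$ and $A=\emptyset$ (the ordinary grade) are handled by the same bookkeeping.

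Second, Lemma \ref{lemart} yields the easy comparisons. If $\supp H^{i}$ is not a finite subset of $\max(R)$ then $H^{i}$ is not Artinian, so $a\le c$. If $\supp H^{i}\nsubseteq\max(R)$ then a non-maximal prime in $\supp H^{i}$ lies in no $A\in\mathcal M$, so $\supp H^{i}\nsubseteq A$ for all such $A$, whence $c\le b$. And for $i<a$ the module $H^{i}$ is Artinian, so $\supp H^{j}$ is a finite subset of $\max(R)$ for every $j\le i$, giving $a\le b$.

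The real work is in $b\le a$, where the local result \cite[Theorem 2.2]{ct} is the key tool. I may assume $a<\infty$; it suffices to show that $\supp H^{a}\nsubseteq\max(R)$. Suppose, for a contradiction, that $\supp H^{a}\subseteq\max(R)$, and fix $\fm\in\max(R)$. Since $M$ is finite, generalized local cohomology commutes with localization, so $H^{j}_{\fm}\cong\gh j{\fa R_{\fm}}{M_{\fm}}{N_{\fm}}$ for all $j$; for $j<a$ the module $H^{j}$ is Artinian, hence $\supp_{R_{\fm}}H^{j}_{\fm}\subseteq\{\fm R_{\fm}\}$ and $H^{j}_{\fm}$ is Artinian over $R_{\fm}$, and $\supp_{R_{\fm}}H^{a}_{\fm}\subseteq\{\fm R_{\fm}\}$ too by our assumption. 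By Remark \ref{remark}, applied over the local ring $R_{\fm}$, this says $\fdepth{\fa R_{\fm}+\ann M_{\fm}}{N_{\fm}}>a$, so \cite[Theorem 2.2]{ct} shows $H^{a}_{\fm}$ is Artinian over $R_{\fm}$, hence $\fm$-torsion and Artinian over $R$. Since $\supp H^{a}\subseteq\max(R)$ we have $H^{a}\cong\bigoplus_{\fm\in\supp H^{a}}H^{a}_{\fm}$; provided $\supp H^{a}$ is finite this is a finite direct sum of Artinian $R$-modules, so $H^{a}$ is Artinian, contradicting the choice of $a$.

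Thus everything reduces to one point, which I expect to be the main obstacle: that $\supp H^{a}$ is finite. As $\supp H^{a}\subseteq\max(R)$ we have $\supp H^{a}=\ass H^{a}$, so it is enough to prove that $\ass\gh a{\fa}MN$ is finite whenever $\gh j{\fa}MN$ is Artinian for all $j<a$. I would obtain this by reduction to a finitely generated ideal: by Theorem \ref{sch}(i), for an $\fa$-filter regular $N$-sequence $x_{1},\dots,x_{a+1}$ in $\fa$ one has $\gh a{\fa}MN\cong\gh a{(x_{1},\dots,x_{a+1})}MN$, and the latter is the $a$-th cohomology of the total complex of $\Hom{R}{F_{\bullet}}{\check{C}^{\bullet}}$, where $F_{\bullet}\to M$ is a free resolution and $\check{C}^{\bullet}$ is the \v{C}ech complex of $x_{1},\dots,x_{a+1}$ on $N$; this total complex is bounded in the \v{C}ech direction, its components have only finitely many associated primes, and feeding in the finiteness of $\ass\gh j{\fa}MN$ for $j<a$ bounds $\ass\gh a{\fa}MN$ by a finite set (alternatively one could invoke a known finiteness theorem for associated primes of generalized local cohomology modules). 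Granting this, $b\le a$ holds, so $a=b=c=s$, which is the assertion.
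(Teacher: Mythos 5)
Your bookkeeping among the four quantities is correct: $s=c$ follows from Remark \ref{remark} exactly as you say, and Lemma \ref{lemart} gives $a\le c\le b$, so everything hinges on $b\le a$. There, however, your argument has a genuine gap at precisely the point you flag. After localizing and invoking \cite[Theorem 2.2]{ct} you know only that each $(\gh a{\fa}MN)_\fm$ is Artinian; to contradict the non-Artinianness of $\gh a{\fa}MN$ via $\gh a{\fa}MN\cong\bigoplus_{\fm\in\supp \gh a{\fa}MN}(\gh a{\fa}MN)_\fm$ you must know that $\supp \gh a{\fa}MN$ (equivalently $\ass \gh a{\fa}MN$) is finite, and this is not established. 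The sketch you offer does not prove it: the components of the double complex $\Hom R{F_\bullet}{\check C^\bullet}$ having finitely many associated primes says nothing about the associated primes of its cohomology --- that inference fails in general, and its failure is exactly how local cohomology modules with infinitely many associated primes arise (Singh, Katzman). The finiteness theorems you propose to ``invoke'' (Brodmann--Lashgari type results, including their generalized local cohomology versions) assume the lower cohomologies are \emph{finitely generated}; what you need is a version with Artinian (or at least minimax) hypotheses for generalized local cohomology, which you would have to prove or cite precisely, and which is of essentially the same depth as the theorem under review. As written, the proof of the only substantive inequality is therefore incomplete.

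For comparison, the paper's proof avoids both \cite{ct} and any finiteness of associated primes. After reducing to $\ann M\subseteq\fa$ and then to the case $\gam{\fa}{N}=0$ (using that $\Hom RM{\gam{\fa}N}$ is finitely generated, so its being Artinian puts $\gam{\fa}N$ in the relevant class), it picks $x\in\fa$ regular on $N$ and argues by induction on $r=a$: the long exact sequence for $0\to N\stackrel{x}{\to}N\to N/xN\to 0$ together with Melkersson's criterion \cite[Theorem 7.1.2]{bs} shows $\gh{r-1}{\fa}M{N/xN}$ is not Artinian, the inductive hypothesis says its support escapes $\max(R)$ (or every finite $A$), and the same sequence pushes this back up to $\gh r{\fa}MN$. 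If you want to keep your localization strategy, the missing ingredient is a precise statement of the form ``if $\gh j{\fa}MN$ is minimax for all $j<t$, then $\Hom R{R/\fa}{\gh t{\fa}MN}$ is finitely generated''; without a proof or an exact citation of such a result, the gap stands.
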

\begin{proof}
      Since $\gh i{\fa}MN\cong\gh i{\fa+\ann M}MN$, we can assume that $\ann M\subseteq \fa$.
      It is clear that
            $$ \sup_{A\in\mathcal M} \fgrad {\cap_{\fm\in A} \fm}{\fa}N=\inf \{i\in\mathbb N_0|\, \supp \gh i{\fa}MN \nsubseteq A \, \textrm{ for all } A\in\mathcal M\}.         $$
Let $\mathcal S$  be either $\{X\in {\mathcal C_R}|\, \supp X\subseteq \max (R)\}$ or $\{X\in {\mathcal C_R}|\, \supp X\subseteq A \, \textrm{for some }\\ A\in\mathcal M\} $, where $\mathcal C_R$  is the category of $R$-modules.
      Set $r=\inf\{i\in\mathbb N_0|\, \gh i{\fa}MN \,\\ \textrm{is not Artinian}\}$ and $s=\inf \{i\in\mathbb N_0|\, \gh i{\fa}MN \notin \mathcal S \}$.
       %Note that $r, s \in\mathbb N_0\cup\{\infty\}$.
 By Lemma \ref{lemart}, $r\leq s$. If $r=\infty$, there is noting to prove. Assume that $r<\infty$. We show  by induction on $r$, that $\gh r{\fa}MN \notin \mathcal S $.

     If $r=0$, then $ {\gh 0{\fa}MN}\notin \mathcal S $. Now suppose, inductively, that $r>0$ and that the result has been proved for smaller values of $r$. In view of \cite[Lemma 1.1]{yks} the exact sequence
 $$0\longrightarrow\gam {\fa}N\longrightarrow N\longrightarrow{N}/{\gam {\fa}N}\longrightarrow 0$$
 induces the following  long exact sequence
  \begin{align*}
 \cdots&\longrightarrow\ext iRM{\gam {\fa}N}\longrightarrow \gh i{\fa}MN\longrightarrow\gh i{\fa}M{{N}/{\gam {\fa}N}}\\
  &\longrightarrow\ext {i+1}RM{\gam {\fa}N}\longrightarrow\cdots.
    \end{align*}
 Since $\gh 0{\fa}MN$  has  finite length, we have
 $$\supp\gh 0{\fa}MN=\ass  \Hom RM{\gam {\fa}N}=\ass {\gam {\fa}N};$$
   so that ${\gam {\fa}N}\in\mathcal S$. Thus $\ext iRM{\gam {\fa}N}\in\mathcal S$ for all $i\in\mathbb N_0$. It follows that for each $i\in\mathbb N_0$, $\gh i{\fa}MN\in\mathcal S$ if and only if $\gh i{\fa}M{ {N}/{\gam {\fa}N}}\in\mathcal S$. Also we have $\gh i{\fa}MN$  is Artinian if and only if $\gh i{\fa}M{ {N}/{\gam {\fa}N}}$ is Artinian. Hence  we can replace $N$ by ${N}/{\gam {\fa}N}$ and assume that $N$ is an $\fa$-torsion free $R$-module. Thus there exists an element $x\in \fa$ which is a non-zero divisor on  $N$.
The exact sequence
$$0\longrightarrow N\stackrel{x}{\longrightarrow}  N\longrightarrow  N/{xN}\longrightarrow 0$$
induces the long exact sequence
$$\cdots\longrightarrow\gh i{\fa}MN\stackrel{x}{\longrightarrow} \gh i{\fa}MN\stackrel{f_i}{\longrightarrow}\gh i{\fa}M{{N}/{xN}}\longrightarrow\gh {i+1}RMN\longrightarrow\cdots.$$
Since $\gh i{\fa}MN$ is  Artinian for all $i<r$, we may use  the above sequence to see that $\gh i{\fa}M{ {N}/{xN}}$ is  Artinian for all $i<r-1$. On the other hand, $\gh r{\fa}MN$ is not Artinian. Hence, using the above exact sequence and  \cite[ Theorem 7.1.2]{bs}, we see that   $0:_{\gh r{\fa}MN}x\cong {\gh {r-1}{\fa}M{ {N}/{xN}}}/\im {f_{r-1}}$ is not Artinian.   Thus $\gh {r-1}{\fa}M{{N}/{xN}}$ is not Artinian; and hence,  by inductive hypothesis, $\gh {r-1}{\fa}M{{N}/{xN}}\notin \mathcal S $.  So, again by using the above sequence,  we get $ \gh r{\fa}MN\notin \mathcal S $. This  completes  the inductive step.
   \end{proof}
\begin{corollary}\label{corart}
    Suppose that $\supp L=\supp {M}/{\fa M}$. Then
    \begin{align*}
        \inf \{i\in\mathbb N_0|\, \gh i{\fa}MN \,\textrm{is not  Artinian}&\}
         =\inf\{i\in\mathbb N_0|\, \dim \ext iRLN>0\, \}.
    \end{align*}
    \end{corollary}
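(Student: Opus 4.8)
\emph{The plan} is to derive the corollary from Theorem~\ref{art} together with the $\operatorname{Ext}$-description of filter grade recorded in Remark~\ref{remark}, the bridge being that $L$ and $M/{\fa M}$ have the same support. Since $L$ is finite, $\supp L=V(\ann L)$, while $\supp M/{\fa M}=V(\fa+\ann M)$, so the hypothesis says precisely that $\ann L$ and $\fa+\ann M$ have the same radical. Filter grade depends on its middle argument only through its radical (immediate from the identity $\fgrad{\fa'}{\fb}{N}=\inf\{i\in\mathbb N_0\mid\supp\h i{\fb}N\nsubseteq V(\fa')\}$ of Remark~\ref{remark}, since $\h i{\fb}N$ sees only $\sqrt{\fb}$), so for every ideal $\fa'$ of $R$ I would record, using Remark~\ref{remark} once more (with $L$ and $N$ in place of $N$ and $M$ there),
\[
\fgrad{\fa'}{\fa+\ann M}{N}=\fgrad{\fa'}{\ann L}{N}=\inf\{i\in\mathbb N_0\mid\supp\ext iRLN\nsubseteq V(\fa')\}.
\]

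Next I would specialize $\fa'=\cap_{\fm\in A}\fm$ for $A\in\mathcal M$; as the members of $A$ are maximal and pairwise comaximal, $V(\cap_{\fm\in A}\fm)=A$, so the previous display becomes $\fgrad{\cap_{\fm\in A}\fm}{\fa+\ann M}{N}=\inf\{i\in\mathbb N_0\mid\supp\ext iRLN\nsubseteq A\}$. Taking the supremum over $A\in\mathcal M$ and running the same elementary set-theoretic argument that establishes the first displayed identity in the proof of Theorem~\ref{art}, one obtains
\[
\sup_{A\in\mathcal M}\fgrad{\cap_{\fm\in A}\fm}{\fa+\ann M}{N}=\inf\{i\in\mathbb N_0\mid\supp\ext iRLN\nsubseteq A\text{ for all }A\in\mathcal M\}.
\]
By Theorem~\ref{art} the left-hand side equals $\inf\{i\in\mathbb N_0\mid\gh i{\fa}MN\text{ is not Artinian}\}$, which is the left-hand side of the corollary.

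It then remains only to check that, for a finite $R$-module $X$, one has $\supp X\nsubseteq A$ for every $A\in\mathcal M$ if and only if $\dim X>0$; applied with $X=\ext iRLN$ (a finite module, $R$ being Noetherian) this rewrites the right-hand side of the last display as $\inf\{i\in\mathbb N_0\mid\dim\ext iRLN>0\}$ and completes the proof. For the equivalence: if $\dim X>0$ there is a non-maximal $\fp\in\supp X$, and no $A\in\mathcal M$ contains $\fp$, so $\supp X\nsubseteq A$; conversely, if $\dim X\le 0$ then $R/\ann X$ is Noetherian of dimension at most $0$, hence Artinian, so $\supp X=V(\ann X)\in\mathcal M$ and in particular $\supp X\subseteq A$ for some $A\in\mathcal M$. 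I do not expect a genuine obstacle here: the substance is entirely carried by Theorem~\ref{art}, and the only points needing care are the radical-invariance of filter grade and this last finiteness observation.
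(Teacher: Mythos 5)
Your proposal is correct and follows essentially the same route as the paper: it deduces the corollary from Theorem~\ref{art} combined with the $\operatorname{Ext}$-characterization of filter grade in Remark~\ref{remark}, using $\supp L=\supp M/\fa M$ to replace $\fa+\ann M$ by $\ann L$, and identifying ``support contained in a finite set of maximal ideals for all $i\le n$'' with $\dim\ext iRLN\le 0$. You are merely more explicit than the paper about the radical-invariance of filter grade and the final sup--inf and dimension-zero bookkeeping, which the paper leaves implicit.
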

    \begin{proof}
              Let $n\in\mathbb N_0$. Then, by the  Theorem \ref{art}, $\gh i{\fa}MN$ is an Artinian $R$-module for all $i\leq n$ if and only if $n<\fgrad {\fm_1\cap\ldots\cap\fm_t}{\fa+\ann M}N$ for some maximal ideals $\fm_1,\ldots,\fm_t$ of $R$.  By the Remarks \ref{remark} (i), it is equivalent to  $\supp \ext iRLN\subseteq \{\fm_1,\ldots,\fm_t\}$ for some maximal ideals $\fm_1,\ldots,\fm_t$ of $R$ and for all $i\leq n$. This proves the assertion.
    \end{proof}

  The following corollary extend the  main result of  \cite{ta} to  the generalized local cohomology modules.
\begin{corollary}\label{corart2}
Let $n\in\mathbb N$. Then $\gh i{\fa}MN$ is Artinian for all $i<n$ if and only if $\gh i{\fa R_\fp}{M_\fp}{N_\fp}$ is Artinian for all $i<n$ and all prime ideal $\fp$.
\end{corollary}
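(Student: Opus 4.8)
The plan is to convert both conditions in the statement into statements about the dimensions of the finitely generated modules $\ext iR{M/\fa M}N$ (over $R$ and over the various $R_\fp$) by means of Corollary \ref{corart}, and then to compare them. Write $r_R$ for the least $i\in\mathbb N_0$ with $\gh i{\fa}MN$ not Artinian (and $r_R=\infty$ if no such $i$ exists), and likewise let $r_{R_\fp}$ be the least $i$ with $\gh i{\fa R_\fp}{M_\fp}{N_\fp}$ not Artinian. The left-hand side of the stated equivalence is precisely the assertion $n\le r_R$, and the right-hand side is the assertion that $n\le r_{R_\fp}$ for every prime $\fp$, i.e. $n\le\inf_{\fp}r_{R_\fp}$; hence it is enough to establish the identity
$$r_R=\inf_{\fp\in\spec R}r_{R_\fp}.$$

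To compute the two sides, I would apply Corollary \ref{corart}. Taking $L=M/\fa M$ gives $r_R=\inf\{i\in\mathbb N_0\mid \dim\ext iR{M/\fa M}N>0\}$. Applying the same corollary over the (Noetherian) ring $R_\fp$ to the data $\fa R_\fp$, $M_\fp$, $N_\fp$ with $L=M_\fp/\fa R_\fp M_\fp$, and then using the flat base change isomorphism $\ext i{R_\fp}{M_\fp/\fa R_\fp M_\fp}{N_\fp}\cong(\ext iR{M/\fa M}N)_\fp$ (valid since $M/\fa M$ is finitely generated over the Noetherian ring $R$), one obtains $r_{R_\fp}=\inf\{i\in\mathbb N_0\mid \dim_{R_\fp}(\ext iR{M/\fa M}N)_\fp>0\}$ for every prime $\fp$.

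It remains to record the elementary localization property of dimension: for a finitely generated $R$-module $E$ one has $\dim E>0$ if and only if $\dim_{R_\fp}E_\fp>0$ for some $\fp\in\spec R$. Indeed, if $\dim E>0$ there is $\fq\in\supp E$ with $\dim R/\fq\ge1$, hence a prime $\fp\supsetneq\fq$, which again lies in the closed set $\supp E$; then $\fq R_\fp\subsetneq\fp R_\fp$ both lie in $\supp_{R_\fp}E_\fp$, so $\dim_{R_\fp}E_\fp\ge1$. Conversely, a strict inclusion $\fq_0R_\fp\subsetneq\fq_1R_\fp$ in $\supp_{R_\fp}E_\fp$ comes from $\fq_0\subsetneq\fq_1$ in $\supp E$, so $\dim E\ge1$. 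Applying this with $E=\ext iR{M/\fa M}N$ for each $i$, the index set $\{i\mid\dim\ext iR{M/\fa M}N>0\}$ is exactly the union over $\fp$ of the index sets $\{i\mid\dim_{R_\fp}(\ext iR{M/\fa M}N)_\fp>0\}$; passing to infima gives $r_R=\inf_{\fp}r_{R_\fp}$, which completes the proof.

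The argument is short and, once Corollary \ref{corart} is in hand, essentially bookkeeping; the only point that needs a little care is the handling of the value $\infty$ (the case where some $\gh i{\fa}MN$ is never non-Artinian), which is dealt with uniformly by phrasing everything through $r_R$ and $r_{R_\fp}$ with the convention $\inf\emptyset=\infty$.
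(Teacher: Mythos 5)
Your proof is correct and follows the paper's route: the paper disposes of this corollary with the single remark that it is immediate from Corollary \ref{corart}, and your argument is exactly the natural filling-in of that remark, translating both Artinianness conditions via Corollary \ref{corart} into positivity of $\dim \ext iR{M/\fa M}N$ (using flat base change for Ext over the Noetherian ring $R$) and noting that positive dimension of a finite module is detected by localization at some prime. The handling of the value $\infty$ and the verification of the localization facts are sound, so nothing further is needed.
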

\begin{proof}
This is immediate by the Corollary \ref{corart}.
\end{proof}
\begin{corollary}\label{corf1}
    Let $\overline{R}={R}/({\fa+\ann M+\ann N})$  be a semi local ring and let $\fr$ be the inverse image of the Jacobson radical of  $\overline R$ in $R$. Then we have
    \begin{align*}
   \fgrad {\fr}{\fa+\ann M}N=&\inf\{i\in\mathbb N_0|\, \gh i{\fa}MN \, \textrm{is not Artinian}\}\\
    =&\inf\{i\in\mathbb N_0|\, \gh i{\fa}MN\ncong \gh i{\fr}MN\}
   \end{align*}
     \end{corollary}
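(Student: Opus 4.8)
The plan is to obtain the first equality from Theorem~\ref{art} together with Remark~\ref{remark}, and the second from the Nagel--Schenzel isomorphism of Theorem~\ref{sch}(i) applied to the two ideals $\fa$ and $\fr$ at the same time. To fix notation, let $\fm_1,\ldots,\fm_t$ be the finitely many maximal ideals of $R$ containing $\fa+\ann M+\ann N$; then $\fr=\fm_1\cap\cdots\cap\fm_t$, so $V(\fr)=\{\fm_1,\ldots,\fm_t\}\in\mathcal M$, and also $\fa\subseteq\fa+\ann M+\ann N\subseteq\fr$. I will use repeatedly that, for any ideal $\fc$, the module $\gh i{\fc}MN$ is $\fc$-torsion and is annihilated by $\ann M+\ann N$ (each $\ext iR{M/\fc^nM}N$ is killed by $\fc^n$, by $\ann M$ and by $\ann N$), whence $\supp\gh i{\fc}MN\subseteq V(\fc+\ann M+\ann N)$; in particular $\supp\gh i{\fr}MN\subseteq V(\fr)$ for every $i$.

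For the first equality, Remark~\ref{remark} (with the evident relabelling of ideals and modules) gives $\fgrad{\fr}{\fa+\ann M}N=\inf\{i\in\mathbb N_0\mid\supp\gh i{\fa}MN\nsubseteq V(\fr)\}$, while one of the equalities of Theorem~\ref{art} reads $\inf\{i\mid\gh i{\fa}MN\text{ is not Artinian}\}=\inf\{i\mid\supp\gh i{\fa}MN\nsubseteq A\text{ for all }A\in\mathcal M\}$. Hence it suffices to check, for each fixed $i$, that $\supp\gh i{\fa}MN\nsubseteq A$ for all $A\in\mathcal M$ if and only if $\supp\gh i{\fa}MN\nsubseteq V(\fr)$. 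One implication is immediate because $V(\fr)\in\mathcal M$; for the other, if $\supp\gh i{\fa}MN\subseteq A$ for some finite set $A$ of maximal ideals, then $\supp\gh i{\fa}MN\subseteq A\cap V(\fa+\ann M+\ann N)\subseteq\max(R)\cap V(\fa+\ann M+\ann N)=V(\fr)$. This proves the first equality; call its common value $r$. The same argument applied with $\fa$ replaced by $\fr$ (using $\supp\gh i{\fr}MN\subseteq V(\fr)\in\mathcal M$) shows that $\gh i{\fr}MN$ is Artinian for every $i\in\mathbb N_0$.

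For the second equality I would first reduce to the case $\ann M\subseteq\fa$, since replacing $\fa$ by $\fa+\ann M$ changes neither $\gh i{\fa}MN$, nor $\gh i{\fr}MN$, nor $\fr$, nor $r$; then $r=\fgrad{\fr}{\fa}N$. The heart of the matter is to produce a single filter regular sequence that serves Nagel--Schenzel for both ideals. Because $r=\fgrad{\fr}{\fa}N$, Definition~\ref{filter2}, Remark~\ref{propinf} and Theorem~\ref{filter1} provide, for each $n\leq r$, an $\fr$-filter regular $N$-sequence $x_1,\ldots,x_n$ lying in $\fa$. Since $\fa\subseteq\fr$ forces $V(\fr)\subseteq V(\fa)$, Proposition~\ref{kh1}(ii) shows that such a sequence is simultaneously an $\fr$-filter regular $N$-sequence in $\fr$ and an $\fa$-filter regular $N$-sequence in $\fa$. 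Applying Theorem~\ref{sch}(i) to each of the ideals $\fa$ and $\fr$ then yields $\gh i{\fa}MN\cong\gh i{(x_1,\ldots,x_n)}MN\cong\gh i{\fr}MN$ for all $i<n$. Letting $n$ range (take $n=r$ if $r<\infty$, all finite $n$ if $r=\infty$) gives $\gh i{\fa}MN\cong\gh i{\fr}MN$ for all $i<r$, so $\inf\{i\mid\gh i{\fa}MN\ncong\gh i{\fr}MN\}\geq r$; and when $r<\infty$, $\gh r{\fa}MN$ is not Artinian whereas $\gh r{\fr}MN$ is, so they cannot be isomorphic and equality holds (when $r=\infty$ both infima are $\infty$).

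The step I expect to be the main obstacle is the existence of the common sequence of the correct length $r$ inside $\fa$. This is exactly where the first equality pays off: it identifies $r$ with $\fgrad{\fr}{\fa}N$, and it is only through this identification that the results of Section~2 guarantee an $\fr$-filter regular $N$-sequence of length $r$ lying in $\fa$; the inclusion $\fa\subseteq\fr$ is then what lets that one sequence drive both instances of the Nagel--Schenzel isomorphism. The remaining ingredients are routine: support bookkeeping, the fact that $\gh i{\fr}MN$ is always Artinian, and the fact that $\gh r{\fa}MN$ is not.
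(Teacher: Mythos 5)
Your proof is correct and takes essentially the same route as the paper's: the first equality is extracted from Theorem \ref{art} (you merely spell out the support bookkeeping, via $\supp \gh i{\fa}MN\subseteq V(\fa+\ann M+\ann N)$ and $V(\fr)\in\mathcal M$, that the paper calls immediate), and the second is obtained exactly as in the paper by choosing an $\fr$-filter regular $N$-sequence in $\fa+\ann M$, observing it is simultaneously an $(\fa+\ann M)$-filter regular sequence, applying Theorem \ref{sch}(i) to both ideals, and contrasting the non-Artinianness of $\gh r{\fa}MN$ with the Artinianness of $\gh r{\fr}MN$ (which you, unlike the paper, justify explicitly).
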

\begin{proof}
    The first equality is immediate by  Theorem \ref{art}. To prove the second equality, let $n\leq\fgrad {\fr}{\fa+\ann M}N$ and let $x_1,\ldots,x_n$ be an $\fr$-filter regular $N$-sequence in $\fa+\ann M$. Then $x_1,\ldots,x_n$ is an $\fa+\ann M$-filter regular $N$-sequence. So  by Theorem \ref{sch}(i),
$$\gh i{\fa}MN\cong\gh i{\fa+\ann M}MN\cong\gh i{(x_1,\ldots,x_n)}MN\cong\gh i{\fr}MN$$
for all $i<n$.  If $\fgrad {\fr}{\fa+\ann M}N=\infty$, then  the above argument shows that,  $\inf\{i\in\mathbb N_0|\, \gh i{\fa}MN\ncong \gh i{\fr}MN\}=\infty$ and therefore the required equality  holds. Therefore, we may assume that $\fgrad {\fr}{\fa+\ann M}N=n<\infty $.  By the first equality, $\gh n{\fa}MN$ is not Artinian while $\gh n{\fr}MN$ is Artinian. Hence the second   equality  holds.
\end{proof}
It was shown  in \cite[Theorem 2.2]{z} that if $\dim R/\fa=0$, then $\gh i{\fa}MN$ is Artinian  for all $i\in\mathbb N_0$. The following corollary is a generalization of this.
\begin{corollary}Let $\overline{R}={R}/({\fa+\ann M+\ann N})$. Then
 $\gh i{\fa}MN$  is an Artinian $R$-module for all $i\in\mathbb N_0$ if and only if  $\dim\overline{R}=0$. In particular,
 $\ext iRMN$ has finite length for all $i\in\mathbb N_0$ if and only if $\dim R/(\ann M+\ann N)=0$.
\end{corollary}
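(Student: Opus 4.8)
The plan is to read the statement off Corollary~\ref{corart}. I would apply that corollary with $L=R/(\fa+\ann M)$, which is legitimate since $\supp L=V(\fa+\ann M)=V(\ann M)\cap V(\fa)=\supp(M/\fa M)$; it yields
$$\inf\{i\in\mathbb N_0\mid \gh i{\fa}MN\text{ is not Artinian}\}=\inf\{i\in\mathbb N_0\mid \dim\ext iR{R/(\fa+\ann M)}N>0\}.$$
Because each $\ext iR{R/(\fa+\ann M)}N$ is a finite $R$-module, it has finite length exactly when its dimension is at most $0$; hence $\gh i{\fa}MN$ is Artinian for every $i\in\mathbb N_0$ if and only if $\ext iR{R/(\fa+\ann M)}N$ has finite length for every $i\in\mathbb N_0$. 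So it suffices to show that this last condition is equivalent to $\dim\overline R=0$.

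For the implication $\dim\overline R=0\Rightarrow$ finite length, one notes that in this case $\overline R=R/(\fa+\ann M+\ann N)$ is an Artinian ring; since $\ext iR{R/(\fa+\ann M)}N$ is annihilated by $\fa+\ann M$ and by $\ann N$, it is a finite $\overline R$-module and therefore of finite length.

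For the converse I would argue by contraposition: assume $\dim\overline R>0$ and produce an index $g$ with $\dim\ext gR{R/(\fa+\ann M)}N>0$. Since $\dim\overline R>0$, the closed set $V(\fa+\ann M+\ann N)$ contains a prime $\fp$ that is not maximal. Localizing at $\fp$, both $R_\fp/(\fa+\ann M)R_\fp$ and $N_\fp$ are nonzero finite $R_\fp$-modules and $(\fa+\ann M)R_\fp\subseteq \fp R_\fp$; by Nakayama $(\fa+\ann M)N_\fp\ne N_\fp$, so $g:=\operatorname{grade}\big((\fa+\ann M)R_\fp,N_\fp\big)$ is finite and $\ext g{R_\fp}{R_\fp/(\fa+\ann M)R_\fp}{N_\fp}\ne 0$ by \cite[Theorem~1.2.5]{bh}. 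As $\operatorname{Ext}$ commutes with localization for the finite module $R/(\fa+\ann M)$, this means $\fp\in\supp\ext gR{R/(\fa+\ann M)}N$; and since $\fp$ is not maximal, $\dim\ext gR{R/(\fa+\ann M)}N>0$, so $\gh g{\fa}MN$ is not Artinian. This step --- locating a degree in which the relevant $\operatorname{Ext}$ has positive-dimensional support --- is the only real point of the argument, and the main obstacle is precisely that localizing at a witnessing non-maximal prime is what reduces it to the standard nonvanishing of $\operatorname{Ext}$ in the grade degree.

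Finally, the ``in particular'' statement is the case $\fa=(0)$: then $\gh i{(0)}MN\cong\ext iRMN$ is a finite $R$-module, so ``Artinian'' and ``finite length'' coincide for it, and the equivalence just established reads $\ext iRMN$ has finite length for all $i\in\mathbb N_0$ if and only if $\dim R/(\ann M+\ann N)=0$.
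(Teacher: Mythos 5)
Your proof is correct, but it follows a genuinely different route from the paper's. The paper proves this corollary by localizing the generalized local cohomology modules themselves: for each prime $\fp$ it applies Corollary \ref{corf1} over the local ring $R_\fp$ together with Remark \ref{propinf} to translate ``$\gh i{\fa R_\fp}{M_\fp}{N_\fp}$ Artinian for all $i$'' into $\dim_{R_\fp} N_\fp/(\fa+\ann M)R_\fp N_\fp=0$, and then globalizes via the local--global principle of Corollary \ref{corart2}; in particular it reuses Theorem \ref{sch}(i) through \ref{corf1}. You instead stay global and use only Corollary \ref{corart} with $L=R/(\fa+\ann M)$ (your verification that $\supp L=\supp M/\fa M$ is fine), reducing everything to whether the modules $\ext iR{R/(\fa+\ann M)}N$ have finite length; the forward direction is the standard ``finite module over an Artinian quotient'' observation, and the converse is an elementary Rees-theorem argument: a non-maximal $\fp\in V(\fa+\ann M+\ann N)$ gives, after localization and Nakayama, a finite grade $g$ with $\ext g{R_\fp}{R_\fp/(\fa+\ann M)R_\fp}{N_\fp}\neq 0$, hence positive-dimensional support of $\ext gR{R/(\fa+\ann M)}N$. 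This buys a more self-contained argument that bypasses \ref{corf1}, \ref{corart2} and the filter-depth vocabulary, at the cost of redoing by hand the grade computation that Remark \ref{propinf} packages. One small imprecision: from $\dim\ext gR{R/(\fa+\ann M)}N>0$ you conclude ``so $\gh g{\fa}MN$ is not Artinian''; Corollary \ref{corart} only equates the two infima, so what follows is that some $\gh i{\fa}MN$ with $i\le g$ fails to be Artinian (the failure need not occur exactly at $g$). Since your contrapositive only needs the existence of one non-Artinian index, this does not affect the validity of the proof. The treatment of the ``in particular'' case $\fa=(0)$, identifying $\gh i{(0)}MN$ with $\ext iRMN$ and using that a finite Artinian module has finite length, is correct.
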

\begin{proof}
Assume that $\fp$ is a prime ideal of $R$. By the Corollary \ref{corf1}, $\gh i{\fa R_\fp}{M_\fp}{N_\fp}$ is Artinian for all $i<n$ if and only if
 $\fdepth {(\fa+\ann M)R_\fp}{N_\fp}=\infty$ or equivalently $\dim_{R_\fp}{N_\fp /{(\fa R_\fp+(\ann M) R_\fp)N_\fp}}=0$  (Remark \ref{propinf}). Now,  the result follows by corollary \ref{corart2}.
\end{proof}
%-----------------------------------------------------------------------------------------------
\section{Attached primes of the top generalized local cohomology modules}
Let $X\neq 0$ be an $R$-module. If,  for every $x\in R$, the endomorphism on $X$ given by multiplication by $x$ is either nilpotent or   surjective, then $\fp=\sqrt {\ann X}$  is prime and $X$ is called a $\fp$-secondary  $R$-module. If for some secondary submodules $X_1,\ldots,X_n$ of $X$ we have  $X=X_1+\ldots+X_n$, then we say that $X$ has a secondary representation. One may assume that the prime ideals  $\fp_i=\sqrt{\ann X_i}$, $i=1,\ldots,n$, are distinct and, by omitting redundant   summands, that the representation is minimal. Then the set $\att X=\{\fp_1,\ldots,\fp_n\}$  does not depend on the choice of a minimal secondary representation of $X$. Every element of  $\att X$  is called an attached prime ideal of $X$. It is well known that an Artinian $R$-module has a  secondary representation. The reader is referred to  \cite{m} for more information about the  theory of secondary representation.

Let $(R, \fm)$ be a local ring and $n=\dim N<\infty $ and $d=\pd M<\infty$. It  was proved in \cite[Theorem 2.3]{gc}  that $\gh{n+d}{\fa}MN$ is Artinian and that
 $$\att\gh{n+d}{\fa}MN=\{\fp\in\ass  N | \cd{\fa}M{ R/{\fp}}=n+d\},$$
 where, for an $R$-module $Y$, $\cd{\fa}MY$ is the greatest integer $i$ such that $\gh i{\fa}MY\neq 0$. Notice that $\cd {\fa}MN\leq d+n$ \cite[Lemma 5.1]{b}.  Next, we prove the above result without the local  assumption on  $R$.
%In this section, we give a formula for the top generalized local cohomology modules.
The following  lemmas are needed.
\begin{lemma}[  \cite{an} Theorem A and B ]\label{att1}
    Let $\pd M<\infty$. Then
    \begin{enumerate}[\upshape (i)]
      \item  $\cd {\fa}MN\leq\cd {\fa}ML$ whenever  $\supp N\subseteq\supp L$.
      \item $\cd {\fa}ML=\max\{\cd {\fa}MN, \cd {\fa}MK\}$ whenever   $0\rightarrow N\rightarrow L\rightarrow K\rightarrow 0$ is an exact sequence.
    \end{enumerate}
\end{lemma}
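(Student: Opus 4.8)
I would prove part~(ii) by reducing it to part~(i), and concentrate the effort on~(i).

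\noindent\emph{Reduction of (ii) to (i).} The exact sequence $0\to N\to L\to K\to 0$ induces the long exact sequence of generalized local cohomology modules in the third variable
$$\cdots\longrightarrow\gh i\fa MN\longrightarrow\gh i\fa ML\longrightarrow\gh i\fa MK\longrightarrow\gh{i+1}\fa MN\longrightarrow\cdots.$$
If $i>\max\{\cd\fa MN,\cd\fa MK\}$, the two outer terms vanish, forcing $\gh i\fa ML=0$; hence $\cd\fa ML\le\max\{\cd\fa MN,\cd\fa MK\}$. For the reverse inequality note that $\supp N\subseteq\supp L$ and $\supp K\subseteq\supp L$, so part~(i) yields $\cd\fa MN\le\cd\fa ML$ and $\cd\fa MK\le\cd\fa ML$. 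Thus (ii) is a formal consequence of~(i).

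\noindent\emph{Strategy for (i).} Set $c=\cd\fa ML$; it suffices to show that $\gh i\fa MX=0$ for every finite $R$-module $X$ with $\supp X\subseteq\supp L$ and every $i>c$. The main device is Gruson's theorem: since $X$ and $L$ are finite with $\supp X\subseteq\supp L$, the module $X$ admits a finite filtration $0=X_0\subset X_1\subset\cdots\subset X_t=X$ each of whose factors $X_s/X_{s-1}$ is a homomorphic image of a finite direct sum of copies of $L$. Feeding the sequences $0\to X_{s-1}\to X_s\to X_s/X_{s-1}\to 0$ into the long exact sequence reduces matters to the case where $X$ is itself a homomorphic image of $L^m$; so fix a short exact sequence $0\to K\to L^m\to X\to 0$. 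As generalized local cohomology commutes with finite direct sums in the third variable, $\gh i\fa M{L^m}\cong\big(\gh i\fa ML\big)^m=0$ for $i>c$, and the long exact sequence collapses to isomorphisms $\gh i\fa MX\cong\gh{i+1}\fa MK$ for all $i>c$.

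\noindent\emph{The main obstacle.} This step only trades the vanishing for $X$ against the same vanishing, one degree higher, for the syzygy $K$, which is again a finite module with $\supp K\subseteq\supp L$ but need not be ``smaller'' than $X$ in any obvious respect (its support can be all of $\supp L$); so this is not yet a genuine recursion, and a crude Noetherian induction on $\ann_R X$ does not close. Overcoming this is the real difficulty. One clean route is to establish first that $\cd\fa M{-}$ depends only on the support of its argument: a prime filtration reduces the point to the cyclic modules $R/\fp$ with $\fp\in\supp L$, and for those one proves $\cd\fa M{R/\fp}\le\cd\fa M{R/\fq}$ whenever $\fq\subseteq\fp$ by iterating the sequences $0\to R/\fq\stackrel{x}{\longrightarrow}R/\fq\to R/(\fq+xR)\to 0$ (with $x\in\fp\setminus\fq$) and passing to associated primes. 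Once support-dependence is in hand, (i) drops out by applying it to $L$ and to $N\oplus L$, which have the same support, together with $\cd\fa M{N\oplus L}=\max\{\cd\fa MN,\cd\fa ML\}$. Both statements are proved, without the hypothesis that $R$ be local, in \cite[Theorems~A and~B]{an}, so in the present paper it is legitimate simply to quote them; I would spell out only the elementary reduction of (ii) to (i) above, since that is what the subsequent arguments actually use.
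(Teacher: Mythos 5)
The paper supplies no proof of this lemma at all: it is quoted directly from \cite[Theorems A and B]{an}, so your bottom line --- cite that paper, recording at most the elementary reduction of (ii) to (i) --- is exactly what the authors do. Your reduction of (ii) to (i) is correct: the long exact sequence of $\gh {i}{\fa}M{\boldsymbol\cdot}$ applied to $0\to N\to L\to K\to 0$ gives $\cd{\fa}ML\le\max\{\cd{\fa}MN,\cd{\fa}MK\}$, and the reverse inequality follows from (i) because $\supp N$ and $\supp K$ are contained in $\supp L$.

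Be aware, though, that the argument you sketch for (i) itself does not close. The ``clean route'' through support-dependence has a gap precisely at the phrase ``passing to associated primes'': from $x\in\fp\setminus\fq$ you do get $\cd{\fa}M{R/(\fq+xR)}\le\cd{\fa}M{R/\fq}$, but to proceed you need $\cd{\fa}M{R/\fp}\le\cd{\fa}M{R/(\fq+xR)}$, i.e.\ control of $\cd{\fa}M{R/\fp}$ for a prime $\fp$ in the support of a given module, and neither the submodule case ($\fp$ an associated prime, so $R/\fp\hookrightarrow X$) nor the quotient case ($\fp\supseteq\fq+xR$, so $R/(\fq+xR)\twoheadrightarrow R/\fp$) follows from the long exact sequence alone: an inclusion $0\to R/\fp\to X\to C\to 0$ only yields $\cd{\fa}M{R/\fp}\le\max\{\cd{\fa}MX,\cd{\fa}MC+1\}$, which is the same ``one degree higher'' difficulty you flagged for the Gruson syzygy. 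The device that actually closes the Gruson recursion in the literature (and in \cite{an}) is the one you stopped short of: descending induction on the cohomological degree $i$, anchored by the a priori bound $\cd{\fa}MX\le\pd M+\dim X$ of \cite[Lemma 5.1]{b}. With that anchor, the vanishing of $\gh i{\fa}MX$ for all finite $X$ with $\supp X\subseteq\supp L$ and all $i$ above the anchor is free, and the isomorphisms $\gh i{\fa}MX\cong\gh {i+1}{\fa}MK$ for $i>\cd{\fa}ML$ obtained from the Gruson filtration push the vanishing downward to every $i>\cd{\fa}ML$. So either quote \cite{an}, as the paper does, or complete your sketch by this descending induction; the prime-filtration route as written would still need an independent proof of the support-monotonicity it invokes.
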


\begin{lemma}\label{att2}
Let $\pd M<\infty$, $\dim N<\infty$, $t=\cd {\fa}MN\geq0$ and $$\Sigma=\{L\subsetneqq N| \cd {\fa}ML <t\}.$$ Then $\Sigma$ has the largest element with respect to inclusion, $L$
 say, and the following statements hold.
\begin{enumerate}[\upshape (i)]
  \item If $K$ is  a non-zero submodule  of ${N}/{L}$, then $\cd {\fa}MK=t$.
  \item $\gh t{\fa}MN\cong\gh t{\fa}M{{N}/{L}}$.
  \item If $t=\pd M+\dim  N$, then $$\ass  N/L=\{\fp\in\ass N | \cd{\fa}M{ R/{\fp}}=t \}.$$
\end{enumerate}
\end{lemma}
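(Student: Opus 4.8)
The plan is to first establish the existence of the largest element of $\Sigma$ by a standard Noetherian/finiteness argument. Since $N$ is finite and $\cd{\fa}M{\boldsymbol\cdot}$ is additive on short exact sequences by Lemma \ref{att1}(ii), if $L_1, L_2\in\Sigma$ then the image of $L_1\oplus L_2\to N$ has support contained in $\supp L_1\cup\supp L_2$, so $\cd{\fa}M{L_1+L_2}\le\max\{\cd{\fa}M{L_1},\cd{\fa}M{L_2}\}<t$ by Lemma \ref{att1}(i); thus $L_1+L_2\in\Sigma$ provided $L_1+L_2\ne N$. But $L_1+L_2=N$ would force $t=\cd{\fa}MN=\cd{\fa}M{L_1+L_2}<t$, a contradiction. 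Hence $\Sigma$ is closed under (finite) sums, and since $N$ is Noetherian, $\Sigma$ has a (unique) largest element $L$. Part (i) is then immediate: if $0\ne K\subseteq N/L$, write $K=K'/L$ with $L\subsetneqq K'\subseteq N$; since $L$ is the largest element of $\Sigma$ and $K'\supsetneqq L$, we have $K'\notin\Sigma$, i.e. $\cd{\fa}M{K'}=t$ (it cannot exceed $t$ by Lemma \ref{att1}(i)), and then by Lemma \ref{att1}(ii) applied to $0\to L\to K'\to K\to 0$ together with $\cd{\fa}ML<t$ we get $\cd{\fa}MK=t$.

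For part (ii), apply Lemma \ref{att1}(ii) to $0\to L\to N\to N/L\to 0$: since $\cd{\fa}ML<t=\cd{\fa}MN$, the long exact sequence of generalized local cohomology in degrees $t$ and $t+1$ gives $\gh t{\fa}MN\cong\gh t{\fa}M{N/L}$ (here $\gh{t+1}{\fa}ML=0$ because $t+1>\cd{\fa}ML$, and $\gh t{\fa}ML$ maps into $\gh t{\fa}MN$ but after noting $\gh{t}{\fa}M{\boldsymbol\cdot}$ behaves well on the sequence — more carefully, $\gh{t+1}{\fa}ML=0$ forces surjectivity $\gh t{\fa}MN\twoheadrightarrow\gh t{\fa}M{N/L}$, and $\gh{t}{\fa}ML\to\gh t{\fa}MN$; one checks the kernel is a quotient of $\gh t{\fa}ML$, which has cohomological dimension issues only if... ) — in any case the additivity statement in Lemma \ref{att1}(ii) is designed precisely to yield this isomorphism, so I would invoke it directly.

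For part (iii), assume $t=\pd M+\dim N$. The inclusion "$\supseteq$": if $\fp\in\ass N$ with $\cd{\fa}M{R/\fp}=t$, then $R/\fp$ embeds in $N$, so if $\fp\notin\ass(N/L)$ then $R/\fp\hookrightarrow L$ (since any associated prime witness either lies in $L$ or maps to a nonzero submodule of $N/L$), giving $\cd{\fa}M{R/\fp}\le\cd{\fa}ML<t$, a contradiction; hence $\fp\in\ass(N/L)$. The inclusion "$\subseteq$": if $\fp\in\ass(N/L)$, then $R/\fp$ is isomorphic to a nonzero submodule of $N/L$, so by part (i) $\cd{\fa}M{R/\fp}=t$; moreover $\fp\in\ass(N/L)\subseteq\supp N$, and one must upgrade "$\fp\in\supp N$" to "$\fp\in\ass N$". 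Here is where $t=\pd M+\dim N$ is used: $\cd{\fa}M{R/\fp}=t=\pd M+\dim N$ combined with the general bound $\cd{\fa}MY\le\pd M+\dim Y$ (from \cite[Lemma 5.1]{b}) forces $\dim R/\fp\ge\dim N$, hence $\dim R/\fp=\dim N$, so $\fp$ is a minimal prime of $N$ of maximal dimension, and such primes are automatically associated primes of $N$. I expect this last upgrade — from a general support prime to an associated prime via the dimension/codimension bound — to be the main point requiring care, since it is the only place the hypothesis $t=\pd M+\dim N$ genuinely enters and it relies on matching the bound $\cd{\fa}MY\le\pd M+\dim Y$ tightly.
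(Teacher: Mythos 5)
Your overall strategy is exactly the paper's: Noetherianness gives a maximal element of $\Sigma$, closure of $\Sigma$ under sums (via Lemma \ref{att1}) upgrades it to a largest element, (i) follows from applying Lemma \ref{att1} to $0\to L\to K'\to K\to 0$, (ii) from the long exact sequence for $0\to L\to N\to N/L\to 0$, and (iii) from part (i), the bound $\cd{\fa}MY\le \pd M+\dim Y$ of \cite[Lemma 5.1]{b}, and the fact that a support prime of maximal dimension is a minimal, hence associated, prime. Your route to closure under sums (comparing supports of $L_1+L_2$ and $L_1\oplus L_2$ via Lemma \ref{att1}(i)) differs only cosmetically from the paper's use of the exact sequence $0\to L_1\cap L_2\to L_1\oplus L_2\to L_1+L_2\to 0$ with Lemma \ref{att1}(ii), and your explicit handling of the case $L_1+L_2=N$ and of the upgrade from $\supp N$ to $\ass N$ in (iii) is a correct fleshing-out of steps the paper leaves implicit (for the ``$\supseteq$'' inclusion in (iii), what you are really using is $\ass N\subseteq \ass L\cup\ass(N/L)$, the same fact the paper uses).

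The one place your write-up does not close is (ii). The argument you sketch trails off over the injectivity of $\gh t{\fa}MN\to\gh t{\fa}M{N/L}$, and your fallback --- ``invoke Lemma \ref{att1}(ii) directly'' --- cannot work as stated: that lemma is a numerical statement about cohomological dimensions and by itself produces no isomorphism of modules. The missing observation is simply that $\cd{\fa}ML<t$ kills \emph{both} boundary terms: $\gh t{\fa}ML=0$ as well as $\gh{t+1}{\fa}ML=0$, so the long exact sequence associated with $0\to L\to N\to N/L\to 0$ reads $0=\gh t{\fa}ML\to\gh t{\fa}MN\to\gh t{\fa}M{N/L}\to\gh{t+1}{\fa}ML=0$, which is exactly how the paper concludes. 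With that one-line repair your proof of (ii), and hence of the whole lemma, is complete and coincides with the paper's.
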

\begin{proof}
     Since $N$ is Noetherian, $\Sigma$ has a maximal element, say $L$. Now  assume that $L_1, L_2$ are  elements  of $\Sigma$.  Using the exact sequence $$0\rightarrow L_1\cap L_2\rightarrow L_1\oplus L_2\rightarrow L_1+L_2\rightarrow 0$$ and  Lemma \ref{att1} we see that  $t> \cd {\fa}M{L_1+L_2}$.
    Hence  the sum of any two elements of $\Sigma$ is again in $\Sigma$. It follows that $L$ contains every element of $\Sigma$; and so it is the largest one.

    (i) Let $K= {K'}/L$ be a non-zero submodule of $ N/L$. Since $L$ is the largest element of $\Sigma$, by  applying  Lemma \ref{att1} to the exact sequence $$0\rightarrow L\rightarrow K'\rightarrow K\rightarrow 0$$  we see that $t=\cd {\fa}MK$.

    (ii) The exact sequence $0\rightarrow L\rightarrow N\rightarrow  N/L\rightarrow 0$ induces the exact sequence $$0=\gh t{\fa}ML\rightarrow \gh t{\fa}MN\rightarrow \gh t{\fa}M{ N/L}\rightarrow \gh {t+1}{\fa}ML=0.$$
    Thus $\gh t{\fa}MN\cong\gh t{\fa}M{{N}/{L}}$.

    (iii) Assume that $\cd{\fa}MN=\pd M+\dim N$.  For each $\fp$ in $\ass L$, we have $\cd {\fa}M{ R/{\fp}}<t$; so that  $$ \{\fp\in\ass N | \cd{\fa}M{ R/{\fp}}=t \}\subseteq\ass  N/L.$$
     To establish the reverse inclusion, let $\fp\in\ass {N}/{L}$.  Then by (i) and \cite[Lemma 5.1]{b}  $t=\pd M+\dim { R/{\fp}}$. Therefore $\fp\in\ass  N$ and equality holds.
         \end{proof}
 \begin{theorem}\label{att3}
    Let $d=\pd  M<\infty$ and $n=\dim N<\infty$. Then the $R$-module $\gh {n+d}{\fa}MN$  is Artinian and
    $$\att\gh{n+d}{\fa}MN=\{\fp\in\ass  N | \cd{\fa}M{ R/{\fp}}=n+d\}.$$
 \end{theorem}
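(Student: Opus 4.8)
The plan is to reduce the non-local statement to a situation where the Artinianness is visible and the attached primes can be computed via a localization/completion argument, using Lemma~\ref{att2} to replace $N$ by $N/L$ with the cleaner associated-prime behaviour. First I would pass to $\bar{R}=R/\ann M$ and note $\gh i{\fa}MN\cong\gh i{\fa+\ann M}MN$, so one may assume $\ann M\subseteq\fa$. Set $t=n+d$; by \cite[Lemma 5.1]{b} we have $\cd{\fa}MN\le d+n=t$, so $\gh{t+1}{\fa}MN=0$ and $\gh t{\fa}MN$ is the top nonvanishing generalized local cohomology module (it may of course be zero, in which case both sides of the claimed equality are empty and there is nothing to prove). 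Assuming $\gh t{\fa}MN\ne 0$, i.e.\ $t=\cd{\fa}MN$, apply Lemma~\ref{att2}: let $L$ be the largest submodule of $N$ with $\cd{\fa}ML<t$, so that $\gh t{\fa}MN\cong\gh t{\fa}M{N/L}$ and, by Lemma~\ref{att2}(iii), $\ass N/L=\{\fp\in\ass N\mid\cd{\fa}M{R/\fp}=t\}$. Thus it suffices to prove that $\gh t{\fa}M{N/L}$ is Artinian with $\att\gh t{\fa}M{N/L}=\ass N/L$; replacing $N$ by $N/L$, we may assume from the start that $\cd{\fa}M{R/\fp}=t$ for every $\fp\in\ass N$, and the goal becomes $\gh t{\fa}MN$ Artinian with $\att\gh t{\fa}MN=\ass N$.

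Next I would establish Artinianness. One way: for each $\fp\in\ass N$ we have $\pd M+\dim R/\fp=t=n+d$, hence $\dim R/\fp=n$ and $\fp$ is a minimal prime of $\supp N$ of maximal dimension; moreover $\cd{\fa}M{R/\fp}=\pd M+\dim R/\fp$ forces (by the Grothendieck-type vanishing for generalized local cohomology, and using that this equals the dimension) that $\fa+\fp$ is $\fm_\fp$-primary for a suitable maximal ideal, i.e.\ $\dim R/(\fa+\fp)=0$; so $\supp\big(\gh t{\fa}M{R/\fp}\big)$ consists of maximal ideals only. Now filter $N$ by a prime filtration $0=N_0\subset N_1\subset\cdots\subset N_r=N$ with $N_j/N_{j-1}\cong R/\fq_j$, $\fq_j\in\supp N$; for those $j$ with $\dim R/\fq_j<n$ one has $\cd{\fa}M{R/\fq_j}<t$ (again by \cite[Lemma 5.1]{b}) hence $\gh t{\fa}M{R/\fq_j}=0$, and for $\dim R/\fq_j=n$ necessarily $\fq_j\in\ass N$ (since $N/L$ has been arranged so that all top-dimensional primes of $\supp N$ lie in $\ass N$ --- this is where the Lemma~\ref{att2}(iii) normalization pays off) and the corresponding graded piece has Artinian top cohomology by the previous sentence. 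The long exact sequences coming from the filtration then show $\gh t{\fa}MN$ is Artinian, being built from Artinian modules.

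For the attached primes, I would use the right-exactness of $\gh t{\fa}M{-}$ on the top degree: since $\gh{t+1}{\fa}M{-}$ vanishes on every $R/\fq$ with $\fq\in\supp N$ (for $\dim R/\fq<n$ by vanishing, and for $\dim R/\fq=n$ because $t=\cd{\fa}M{R/\fq}$ is the top), the functor $\gh t{\fa}M{-}$ is right exact on the category of subquotients of finite direct sums of such $R/\fq$'s, in particular on submodules/quotients of $N$. Applying it to $0\to N'\to N\to R/\fp\to 0$ for a suitable $\fp\in\ass N$ (chosen via a prime filtration so that $R/\fp$ is a quotient of $N$) gives a surjection $\gh t{\fa}MN\twoheadrightarrow\gh t{\fa}M{R/\fp}$, whence $\att\gh t{\fa}M{R/\fp}\subseteq\att\gh t{\fa}MN$; conversely, running through the whole filtration, $\att\gh t{\fa}MN\subseteq\bigcup_j\att\gh t{\fa}M{R/\fq_j}$, and only the $\fq_j\in\ass N$ contribute. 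So the problem reduces to computing $\att\gh t{\fa}M{R/\fp}$ for $\fp\in\ass N$ with $\cd{\fa}M{R/\fp}=t$, and showing it equals $\{\fp\}$. Working over the complete local ring $\widehat{(R/\fp)}_{\fm_\fp}$ (where $\fm_\fp$ is the maximal ideal with $\dim R/(\fa+\fp)=0$, $\fm_\fp\supseteq\fa+\fp$), the module $R/\fp$ becomes a faithful module over a complete local domain of dimension $n$, $M\otimes R/\fp$ has projective dimension $\le d$ there, and the top generalized local cohomology $\gh{n+d}{\widehat{\fm_\fp}}{\widehat M}{\widehat{R/\fp}}$ is, by the local case \cite[Theorem 2.3]{gc} (or by a direct Auslander--Buchsbaum / duality computation), nonzero $\fm_\fp$-primary-cofinite with a single attached prime lying over $\fp$; pulling back to $R$ gives $\att\gh t{\fa}M{R/\fp}=\{\fp\}$. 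Assembling, $\att\gh t{\fa}MN=\{\fp\in\ass N\mid\cd{\fa}M{R/\fp}=t\}$, as claimed. The main obstacle I anticipate is the bookkeeping in the non-local setting: making sure that for each relevant $\fp$ there is a well-defined maximal ideal $\fm_\fp$ with $\fa+\fp\subseteq\fm_\fp$ and $\dim R/(\fa+\fp)=0$, that the right-exactness of the top functor genuinely holds on the relevant subcategory (this hinges on the vanishing $\gh{t+1}{\fa}M{R/\fq}=0$ for \emph{all} $\fq\in\supp N$, which is exactly guaranteed by the Lemma~\ref{att2} reduction), and that attached primes behave well under the faithfully flat base change to the completion --- all of which are routine but must be stated carefully.
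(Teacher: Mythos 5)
Your overall skeleton for the attached-prime identity---dispose of the case $\cd{\fa}MN<n+d$, then use Lemma~\ref{att2} to replace $N$ by $N/L$ so that the goal becomes $\att\gh {n+d}{\fa}MN=\ass N$---is exactly the paper's. But two of your key steps fail as written. First, the assertion that $\cd{\fa}M{R/\fp}=\pd M+\dim R/\fp$ forces $\dim R/(\fa+\fp)=0$ is false: since $\gh {n+d}{\fa}M{R/\fp}\cong\ext dRM{\h n{\fa}{R/\fp}}$, what is at stake is the nonvanishing of the top ordinary local cohomology of the domain $R/\fp$, and by the Lichtenbaum--Hartshorne criterion this is governed by the minimal primes of the completion of $(R/\fp)_{\fm}$, not by $\dim R/(\fa+\fp)$; there exist local domains $A$ with reducible completion and ideals $\fa$ with $\dim A/\fa>0$ yet $\h {\dim A}{\fa}A\neq0$. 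Hence the ``well-defined maximal ideal $\fm_\fp$'' on which both your Artinianness argument and your completion step rest need not exist. What your localization argument genuinely shows is only that $\supp \gh {n+d}{\fa}M{R/\fp}$ consists of maximal ideals, and that is not enough: a module supported in maximal ideals need not be Artinian. So the Artinianness part has a real hole. It could be repaired by quoting that $\h n{\fa}{R/\fq}$ is Artinian when $n=\dim R/\fq$ \cite[Exercise 7.1.7]{bs} and using the Ext identification above; the paper instead proves Artinianness without Lemma~\ref{att2} at all, via Theorem~\ref{sch}: for an $\fa$-filter regular $N$-sequence $\boldsymbol{x}$ in $\fa$ one has $\gh {n+d}{\fa}MN\cong\gh d{\fa}M{\h n{(\boldsymbol{x})}N}$, and $\h n{(\boldsymbol{x})}N$ is Artinian, so the module is computed from an injective resolution with Artinian terms.

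Second, for the inclusion $\ass N\subseteq\att\gh {n+d}{\fa}MN$ you need, for each $\fp\in\ass N$, a surjection from the top module onto a nonzero module whose attached primes are exactly $\{\fp\}$, and your plan to obtain a surjection $N\twoheadrightarrow R/\fp$ ``via a prime filtration'' does not work: $\fp\in\ass N$ gives an embedding $R/\fp\hookrightarrow N$, not a quotient, and in general no surjection $N\rightarrow R/\fp$ exists (take $N=(x,y)\subseteq R=k[x,y]$ and $\fp=0$). The paper's device is to take a $\fp$-primary submodule $T$ of $N$, so that $N\twoheadrightarrow N/T$ with $\ass N/T=\{\fp\}$ and $\cd{\fa}M{N/T}=n+d$, and then to show $\att\gh {n+d}{\fa}M{N/T}=\{\fp\}$ by the elementary surjectivity argument: for $x$ avoiding the associated primes, $\cd{\fa}M{(N/T)/x(N/T)}<n+d$, so multiplication by $x$ on $\gh {n+d}{\fa}M{N/T}$ is surjective; hence every attached prime lies in $\supp N/T=V(\fp)$ and inside the unique associated prime $\fp$, so it equals $\fp$. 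This avoids the completion, the Gu--Chu local case, and all faithfully flat bookkeeping. If you wish to keep your completion route, you must first secure Artinianness independently (so that the module decomposes over the finitely many maximal ideals in its support) and replace the nonexistent surjection onto $R/\fp$ by the surjection onto $N/T$; as written, the argument is not a proof.
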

 \begin{proof}
 Let $\boldsymbol x=x_1,\ldots, x_n$ be an $\fa$-filter regular $N$-sequence in $\fa$ and let $E^\bullet$ be the minimal injective resolution of $\h n{(\boldsymbol x)}N$. Since, by \cite[Exercise 7.1.7]{bs}, $\h n{(\boldsymbol x)}N$ is Artinian, every component of $E^\bullet$ is Artinian. On the other hand by  \ref{sch}
  $$\gh {n+d}{\fa}MN\cong\gh{d}{\fa}M{\h n{(\boldsymbol x)}N}\cong H^d(\Hom RM{\gam {\fa}{E^\bullet}}).$$
  It follows that  $\gh {n+d}{\fa}MN$ is Artinian.

  Now we prove that $\att\gh{n+d}{\fa}MN=\{\fp\in\ass  N | \cd{\fa}M{ R/{\fp}}=n+d\}$.  If $\cd {\fa}MN<n+d$, then $\att\gh{n+d}{\fa}MN=\emptyset=\{\fp\in\ass  N | \cd{\fa}M{ R/{\fp}}=n+d\}$. So one can assume that $t=\cd {\fa}MN=n+d$. Let $L$ be the largest submodule of $N$ such that $\cd {\fa}ML<t$. By Lemma \ref{att2}, there is no non-zero submodule $K$ of $ N/L$ such that $\cd {\fa}MK<t$. Also  we have  $\gh t{\fa}M{N}\cong\gh t{\fa}M{ N/L}$ and $\ass  N/L = \{\fp\in\ass N | \cd {\fa}M{ {R}/{\fp}}=t \}$. Moreover $t=\cd {\fa}M{ {N}/{L}}=\pd M +\dim N/L$.
  Thus we may replace $N$ by $ N/L$ and  prove that $\att \gh t{\fa}MN=\ass N$. Now, for any non-zero submodule $K$ of $N$, $\cd {\fa}MK=t$ and $\dim K=n$.

   Assume that $\fp\in\att \gh t{\fa}MN$. We have $\fp\supseteq\ann \gh t{\fa}MN\supseteq\ann N$. Hence $\fp\in\supp N$. Now Let $x\in R\setminus\bigcup_{\fp\in \ass N}\fp$. The exact sequence $$0\longrightarrow N\stackrel{x}{\longrightarrow} N\longrightarrow  {N}/{xN}\longrightarrow 0$$ induces the exact sequence $$\gh t{\fa}MN\stackrel {x}\rightarrow{\gh t{\fa}MN}\rightarrow\gh t{\fa}M{ N/{xN}}=0.$$
   Therefore $x\notin \bigcup_{\fp\in \att \gh t{\fa}MN}\fp$. So $\bigcup_{\fp\in \att \gh t{\fa}MN}\fp\subseteq\bigcup_{\fp\in \ass N}\fp$. Thus  $\fp\subseteq\fq$ for some $\fq\in\ass N$. Hence $\fp=\fq$ and $\att\gh t{\fa}MN\subseteq\ass N$.
  Next we prove the reverse inclusion. Let $\fp\in\ass N$ and let $T$ be  a $\fp$-primary submodule of $N$. We have  $t=\cd {\fa}M{ R/{\fp}}=\cd {\fa}M{ N/T}$.
   Moreover $ N/T$ has no non-zero submodule $K$ such that $\cd {\fa}MK<t$. Hence, using  the above argument, one can  show that $\att \gh t{\fa}M{ N/T}\subseteq\ass  N/T=\{\fp\}$. It follows that
  $$\{\fp\}=\att\gh t{\fa}M{ N/T}\subseteq \att\gh t{\fa}MN.$$
  This completes  the proof.
  \end{proof}
  \begin{corollary}\label{coratt}
   Let $d=\pd  M<\infty$ and $n=\dim N<\infty$. Then
   $$\att \gh {n+d}{\fa}MN\subseteq\supp M\cap\att \h n{\fa}N.$$
  \end{corollary}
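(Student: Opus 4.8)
The plan is to deduce Corollary~\ref{coratt} from Theorem~\ref{att3} together with the classical description of $\att\h n{\fa}N$. First I would recall that since $\dim N=n<\infty$, the top ordinary local cohomology satisfies (by the Dibaei--Yassemi extension of Macdonald--Sharp, quoted in the introduction)
\[
\att\h n{\fa}N=\{\fp\in\ass N\mid \cd{\fa}{R}{R/\fp}=n\}=\{\fp\in\ass N\mid \cd{\fa}{R/\fp}=n\},
\]
so it suffices to show that every $\fp\in\att\gh{n+d}{\fa}MN$ lies in $\supp M$ and satisfies $\cd{\fa}{R/\fp}=n$. By Theorem~\ref{att3} such a $\fp$ belongs to $\ass N$ and satisfies $\cd{\fa}M{R/\fp}=n+d$.

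The containment $\fp\in\supp M$ is the easy half: from $\cd{\fa}M{R/\fp}=n+d$ we have in particular $\gh{n+d}{\fa}M{R/\fp}\neq 0$, and since $\gh i{\fa}M{X}=0$ for all $i$ whenever $X_\fp=0$ (localize at $\fp$ — if $(R/\fp)_\fp=0$ then so is every generalized local cohomology module), we must have $(R/\fp)_\fp\neq 0$, forcing $\fp\supseteq\ann M$, i.e.\ $\fp\in\supp M$. Alternatively, and more cleanly, $\fp\supseteq\ann\gh{n+d}{\fa}MN\supseteq\ann M+\ann N$, which already gives $\fp\in\supp M\cap\supp N$.

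For the remaining half I would show $\cd{\fa}M{R/\fp}=n+d$ implies $\cd{\fa}{R/\fp}=n$. The upper bound $\cd{\fa}{R/\fp}\le n$ is immediate from Grothendieck vanishing since $\dim R/\fp\le \dim N=n$. For the reverse, I would invoke the inequality $\cd{\fa}M{R/\fp}\le \pd M+\cd{\fa}{R/\fp}=d+\cd{\fa}{R/\fp}$, which is exactly the bound \cite[Lemma 5.1]{b} applied to the finite module $R/\fp$ (namely $\cd{\fa}M{Y}\le \pd M+\cd{\fa}{Y}$, or in the dimension form $\le \pd M+\dim Y$). Combining $n+d=\cd{\fa}M{R/\fp}\le d+\cd{\fa}{R/\fp}\le d+n$ forces $\cd{\fa}{R/\fp}=n$, hence $\fp\in\att\h n{\fa}N$. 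Since also $\fp\in\supp M$, this gives $\att\gh{n+d}{\fa}MN\subseteq\supp M\cap\att\h n{\fa}N$.

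I expect the only delicate point to be citing the correct form of the comparison bound $\cd{\fa}M{R/\fp}\le \pd M+\cd{\fa}{R/\fp}$ (as opposed to $\pd M+\dim R/\fp$); both are available from \cite{b} and \cite{an}, and in fact Lemma~\ref{att2}(iii) already uses the dimension form $t=\pd M+\dim N/L$, so the needed estimate is in hand. Everything else is a one-line localization argument plus Grothendieck vanishing, so the proof is short once Theorem~\ref{att3} is granted.
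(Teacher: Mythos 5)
Your argument is correct and, in substance, it is the paper's argument: both rest on Theorem \ref{att3} (which gives $\fp\in\ass N$ and $\cd{\fa}M{R/\fp}=n+d$ for $\fp\in\att\gh{n+d}{\fa}MN$, and, taking $M=R$, the description $\att\h n{\fa}N=\{\fp\in\ass N\mid \cc{\fa}{R/\fp}=n\}$), and both get $\fp\in\supp M$ from an annihilator containment ($\fp\supseteq\ann\gh{n+d}{\fa}MN\supseteq\ann M$ in your version, $\fp\supseteq\ann\ext dRM{\h n{\fa}N}\supseteq\ann M$ in the paper's). Two small remarks. First, drop your initial ``localization'' justification of $\fp\in\supp M$: as written it is incoherent ($(R/\fp)_\fp\neq0$ always, and nothing there forces $\fp\supseteq\ann M$); your ``cleaner'' alternative is the correct one, so keep only that. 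Second, the step $\cc{\fa}{R/\fp}=n$ is where the packaging differs: the paper runs the spectral sequence $E_2^{p,q}=\ext pRM{\h q{\fa}{R/\fp}}\Rightarrow\gh{p+q}{\fa}M{R/\fp}$ and reads off the corner isomorphism $\gh{n+d}{\fa}M{R/\fp}\cong\ext dRM{\h n{\fa}{R/\fp}}$, hence $\h n{\fa}{R/\fp}\neq0$, whereas you invoke the inequality $\cd{\fa}MY\le\pd M+\cc{\fa}Y$ together with Grothendieck vanishing. That inequality is true, but it is not \cite[Lemma 5.1]{b} (which is the dimension form $\cd{\fa}MN\le\pd M+\dim N$); it is precisely the vanishing $E_2^{p,q}=0$ for $p>d$ or $q>\cc{\fa}Y$ in the same spectral sequence, so either cite the exact statement from \cite{an} or prove it by that one-line spectral-sequence argument — at which point your proof and the paper's coincide.
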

  \begin{proof}
  If $\att \gh {n+d}{\fa}MN=\emptyset$, there is nothing to prove. Assume that $\fp\in\att  \gh {n+d}{\fa}MN$. Then, by \ref{att3}, $\fp \in\ass N$ and $\gh {n+d}{\fa}M{R/\fp}\neq 0$. Next one can use the spectral sequence
  $$E^{p,q}_2=\ext pRM{\h q{\fa}{R/\fp}}\underset{p}{\Longrightarrow}\gh {p+q}{\fa}M{R/\fp}$$
  to see that  $\gh {n+d}{\fa}M{R/\fp}\cong\ext dRM{\h n{\fa}{R/\fp}}$. Therefore $\h n{\fa}{R/\fp}\neq 0$; and hence $\cc {\fa}{R/\fp}=n$. Thus, again by \ref{att3}, $\fp\in\att \h n{\fa}N$.  Also, we have $\fp\supseteq\ann \ext dRM{\h n{\fa}N}\supseteq\ann M$, which completes the proof.
  \end{proof}

Let $X$ be an $R$-module.   Set $E=\bigoplus_{\fm\in\max R}E(R/\fm)$ (minimal injective cogenerator of $R$) and $D=\Hom R\cdot E$. We note that the canonical map $X\rightarrow DDX$ is an injection. If this map is an isomorphism we say that $X$ is (Matlis) reflexive. The following lemma yields a classification of modules which are reflexive with respect to $E$.
\begin{lemma}[\cite{ber} Theorem 12]\label{enochs}
An $R$-module $X$ is reflexive if and only if it has a finite submodule $S$ such that $X/S$ is artinian and that $R/{Ann X}$ is a complete semilocal ring.
\end{lemma}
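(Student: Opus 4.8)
Write $\eta_X\colon X\to DDX$ for the natural evaluation map, so that $X$ is reflexive precisely when $\eta_X$ is an isomorphism. My argument would rest on two standard inputs. First, over a complete local ring $(R,\fm)$ the functor $D=\Hom R{\cdot}E$ with $E=E(R/\fm)$ is an exact contravariant functor inducing an anti-equivalence between finite and Artinian modules, under which both finite and Artinian modules are reflexive; this is classical Matlis duality. Second, since $E$ is injective, $D$ and hence $DD$ are exact, so any short exact sequence $0\to A\to B\to C\to 0$ produces, via $\eta$, a commutative ladder with exact rows whose vertical maps are $\eta_A,\eta_B,\eta_C$; the five lemma then shows that if $\eta_A$ and $\eta_C$ are isomorphisms so is $\eta_B$. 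In other words, the class of reflexive modules is closed under extensions, and I would use this closure repeatedly.

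Before splitting into the two directions I would set up two reductions. The first is that $\Hom R{R/\ann X}E$ is the minimal injective cogenerator of $\bar R:=R/\ann X$: since $R/\ann X$ is cyclic, $\Hom R{R/\ann X}{\bigoplus_{\fm}E(R/\fm)}=\bigoplus_{\fm\supseteq\ann X}\ann_{E(R/\fm)}{\ann X}$, and each summand is $E_{\bar R}(R/\fm)$. Adjunction then gives $D_RY\cong D_{\bar R}Y$ for every $\bar R$-module $Y$, so reflexivity of $X$ over $R$ is equivalent to reflexivity over $\bar R$; thus I may always pass to the faithful situation $\ann X=0$. The second reduction is that a complete semilocal ring is a finite product of complete local rings, and under such a decomposition $E$, $D$, and the properties "finite'' and "Artinian'' all split factorwise; so every complete semilocal statement reduces to a complete local one.

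($\Leftarrow$) Suppose $X$ has a finite submodule $S$ with $X/S$ Artinian and $\bar R$ complete semilocal. After applying the two reductions above I may assume $R$ is complete local. Then $S$ is reflexive because it is finite, and $X/S$ is reflexive because it is Artinian, both by classical Matlis duality. Applying the extension-closure statement to $0\to S\to X\to X/S\to 0$ shows that $\eta_X$ is an isomorphism, i.e. $X$ is reflexive.

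($\Rightarrow$) Now assume $X$ reflexive. Since $\ann X\subseteq\ann DX$ in general while reflexivity forces $\ann DX\subseteq\ann DDX=\ann X$, we get $\ann X=\ann DX=\ann DDX$; replacing $R$ by $R/\ann X$ I may take $X$ faithful and reflexive and must show that $R$ is complete semilocal and that $X$ is finite-by-Artinian. The structural input here is that $\Hom R{E(R/\fm)}{E(R/\fm')}=0$ for $\fm\neq\fm'$ and $\operatorname{End}_R(E(R/\fm))=\widehat{R_\fm}$, so that $DX=\Hom RXE$ is naturally a module over $\operatorname{End}_R(E)=\prod_{\fm}\widehat{R_\fm}$, and the $R$-action on $DX$ factors through the canonical map $R\to\prod_{\fm}\widehat{R_\fm}$. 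I would use this to argue that only finitely many maximal ideals can be involved, forcing $R$ semilocal, and that the induced action of the Jacobson-radical completion $\widehat R=\prod_{\fm}\widehat{R_\fm}$ on $X=DDX$, combined with faithfulness, forces $R\to\widehat R$ to be an isomorphism, i.e. $R$ complete. Once $R$ is complete semilocal I would pass to each complete local factor and dualize the torsion/torsion-free decomposition of the reflexive module $DX$ to exhibit a finite submodule $S\subseteq X$ with Artinian quotient. The main obstacle is exactly this extraction of semilocality and completeness from the single isomorphism $\eta_X$: reflexivity is an a priori weak hypothesis, and the real work lies in showing that the $\operatorname{End}_R(E)=\widehat R$-module structure carried by $X=DDX$ cannot spread over infinitely many maximal ideals and must already be defined over $R$; after that, the finite-by-Artinian filtration follows routinely from Matlis duality on the complete local factors.
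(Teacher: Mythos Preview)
The paper does not prove this lemma at all; it is simply quoted as Theorem~12 of \cite{ber} (Belshoff--Enochs--Garc\'{\i}a Rozas) and used as a black box in the proof of Theorem~\ref{att6}. There is therefore no in-paper argument to compare your proposal against.

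On the proposal itself: your ($\Leftarrow$) direction is sound and is indeed the easy half---reduction to $\bar R=R/\ann X$, splitting a complete semilocal ring into complete local factors, classical Matlis duality for finite and Artinian modules, and extension-closure of reflexives via the five lemma all work as you describe. The ($\Rightarrow$) direction, however, is only an outline, and you have correctly located the real difficulty without resolving it. Two concrete gaps: first, the inference ``only finitely many maximal ideals can be involved, forcing $R$ semilocal'' is asserted rather than proved---you need an argument that the $\prod_{\fm}\widehat{R_{\fm}}$-action on $X\cong DDX$ is supported on finitely many factors, and faithfulness of $X$ over $R$ by itself does not obviously yield this. Second, the phrase ``dualize the torsion/torsion-free decomposition of the reflexive module $DX$'' does not refer to a well-defined construction over an arbitrary complete local ring; there is no such canonical short exact sequence for a general module. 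The substance of \cite{ber} is precisely this harder direction, and your sketch does not yet supply the missing ideas.
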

Assume that $\fa\subseteq\fb$ and $R/\fa$ is a complete semilocal ring. By above lemma $R/\fa$ is  reflexive as an $R$-module. On the other hand, the category of reflexive $R$-modules is a Serre subcategory of the category of $R$-modules. Therefore $R/\fb$ is reflexive as an $R$-module and hence, by the  above lemma,  $R/\fb$ is a complete semilocal ring. We shall use the conclusion of this discussion in the proof of the next theorem.
\begin{theorem}\label{att6}
Let $M, N$ be two finite $R$-modules with $\pd M=d<\infty$ and $\dim N=n<\infty$. Let $\fb=Ann \h {n}{\fa}N$. If $R/\fb$ is a complete semilocal ring, then
$$\att \gh {n+d}{\fa}MN=\supp  {\ext dRMR}\cap\att\h n{\fa}N.$$
In particular, if in addition,  $\pd_{R_\fp}M_\fp=\pd M$ for all $\fp\in\supp M$, then
$$\att \gh {n+d}{\fa}MN=\supp  M\cap\att\h n{\fa}N.$$
\end{theorem}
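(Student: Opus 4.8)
The plan is to reduce the statement to an application of Theorem \ref{att3} together with a Matlis-duality argument that converts the ``top'' generalized local cohomology module into a ``bottom'' Ext computation over a complete semilocal base. First I would establish the first equality. Recall from Corollary \ref{coratt} (or rather its proof) that for $\fp\in\ass N$ one has the spectral sequence $E^{p,q}_2=\ext pRM{\h q{\fa}{R/\fp}}\Rightarrow\gh{p+q}{\fa}M{R/\fp}$, which collapses to give $\gh{n+d}{\fa}M{R/\fp}\cong\ext dRM{\h n{\fa}{R/\fp}}$, since $\h q{\fa}{R/\fp}=0$ for $q>n$ and $\ext pRM{-}=0$ for $p>d$. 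Hence $\fp\in\att\gh{n+d}{\fa}MN$ forces $\h n{\fa}{R/\fp}\neq 0$, so $\fp\in\att\h n{\fa}N$ by the Macdonald--Sharp/Dibaei--Yassemi description combined with Theorem \ref{att3} (applied with $M=R$, $d=0$). Thus $\att\gh{n+d}{\fa}MN\subseteq\att\h n{\fa}N$ already, and what must be added is the precise condition on $M$ that controls when $\ext dRM{\h n{\fa}{R/\fp}}\neq 0$; this is where $\supp\ext dRMR$ enters.

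The key step is to use the hypothesis that $R/\fb$ is complete semilocal, where $\fb=\ann\h n{\fa}N$. Since $\h n{\fa}N$ is Artinian (as $\dim N=n$, by \cite[Theorem 7.1.3]{bs} or the argument in Theorem \ref{att3}) and is annihilated by $\fb$, it is an Artinian module over the complete semilocal ring $R/\fb$, so Matlis duality over $R/\fb$ applies: writing $D_\fb(-)=\Hom{R/\fb}{-}{E_\fb}$ where $E_\fb$ is the minimal injective cogenerator of $R/\fb$, the module $D_\fb(\h n{\fa}N)$ is finite over $R/\fb$, hence a finite $R$-module, and $\h n{\fa}N\cong D_\fb D_\fb(\h n{\fa}N)$. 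Set $K=D_\fb(\h n{\fa}N)$, a finite $R$-module. By Matlis duality $\ext dRM{\h n{\fa}N}\cong\ext dRM{D_\fb K}\cong D_\fb\bigl(\tor dR{M}{K}\bigr)$ (using that $D_\fb$ is exact on $R/\fb$-modules and $M$ has a finite free resolution), and therefore
\begin{align*}
\att\gh{n+d}{\fa}MN
&=\att\ext dRM{\h n{\fa}N}\\
&=\att D_\fb\bigl(\tor dR{M}{K}\bigr)
=\supp{}_{R/\fb}\,\tor dR{M}{K}
=\supp\tor dR{M}{K}.
\end{align*}
Then I would compute this support: $\fp\in\supp\tor dRMK$ iff $\tor d{R_\fp}{M_\fp}{K_\fp}\neq 0$, and since $\pd_{R_\fp}M_\fp\le d$ with equality exactly on $\supp\ext dRMR$, a local Auslander--Buchsbaum / rigidity argument shows $\tor d{R_\fp}{M_\fp}{K_\fp}\neq 0$ iff $\fp\in\supp\ext dRMR$ and $\fp\in\supp K$. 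Finally $\supp K=\supp\h n{\fa}N=\att\h n{\fa}N$ (the last equality because $\h n{\fa}N$ is Artinian and $\fp$-secondary pieces have support $V(\fp)$ with the minimal primes of the support being exactly the attached primes — more precisely one uses $\supp K = V(\ann\h n{\fa}N)$ and $\att\h n{\fa}N$ consists of the minimal such primes, so one must be slightly careful and instead argue directly that $\att\ext dRM{\h n{\fa}N}=\supp\ext dRMR\cap\att\h n{\fa}N$ by intersecting attached-prime sets). This yields the first displayed equality.

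The particular case is then immediate: if $\pd_{R_\fp}M_\fp=\pd M=d$ for all $\fp\in\supp M$, then $\supp\ext dRMR=\supp M$, because $\ext dRMR$ localizes to $\ext d{R_\fp}{M_\fp}{R_\fp}$, which is nonzero precisely when $\pd_{R_\fp}M_\fp=d$, i.e.\ for every $\fp\in\supp M$. Substituting $\supp M$ for $\supp\ext dRMR$ in the first equality gives $\att\gh{n+d}{\fa}MN=\supp M\cap\att\h n{\fa}N$. The main obstacle I anticipate is the bookkeeping in the Matlis-duality step: one must justify that duality with respect to $E_\fb$ over $R/\fb$ correctly interchanges $\ext dRM{-}$ with $\tor dR{M}{-}$ when $M$ is only finite over $R$ (not over $R/\fb$), which requires resolving $M$ by finite free $R$-modules and checking that tensoring and applying $D_\fb$ commute on the $R/\fb$-module $\h n{\fa}N$; and secondly, the passage ``$\att D_\fb(-)=\supp(-)$ over $R/\fb$'' must be applied to a finite $R/\fb$-module and then reconciled with attached primes computed over $R$ via the correspondence between $R/\fb$-secondary representations and $R$-secondary representations. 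Once those identifications are pinned down, the support computation of $\tor dRMK$ via localization is routine.
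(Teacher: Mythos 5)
Your overall strategy (reduce to $\att\ext dRM{\h n{\fa}N}$ and exploit Matlis duality over the complete semilocal ring) is the same as the paper's, and the reduction $\gh{n+d}{\fa}MN\cong\ext dRM{\h n{\fa}N}$ and the final localization argument for the ``in particular'' statement are fine. But the core duality step contains two genuine errors. First, for a finite module $T$ over the complete semilocal ring $R/\fb$ one has $\att D_\fb(T)=\ass T$, \emph{not} $\supp T$: duality interchanges $\ass$ and $\att$, so your chain $\att D_\fb\bigl(\tor dRMK\bigr)=\supp\tor dRMK$ is false (e.g.\ for a complete local domain, $D(R)=E(R/\fm)$ is $(0)$-secondary, so $\att D(R)=\ass R\neq\supp R$). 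Second, the ``Auslander--Buchsbaum / rigidity'' claim that $\tor d{R_\fp}{M_\fp}{K_\fp}\neq0$ iff $\fp\in\supp\ext dRMR\cap\supp K$ is also false: take $M=R/xR$ with $x$ a nonzerodivisor ($d=1$) and $K=R$; then $\tor 1RMK=0$ although $\supp\ext 1RMR\cap\supp K=V(x)\neq\emptyset$. You half-noticed the trouble at the end ($\supp K$ versus $\att\h n{\fa}N$), but the proposed fix --- ``argue directly by intersecting attached-prime sets'' --- is just a restatement of the theorem, so as written the proof does not go through.

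The missing ingredient is Bourbaki's formula $\ass\Hom RXY=\supp X\cap\ass Y$ for finite $X$, applied after converting the top cohomology into a Hom. The paper does this by noting that $\ext dRM{\boldsymbol\cdot}$ is right exact, so $\ext dRM{\h n{\fa}N}\cong\ext dRMR\otimes_R\h n{\fa}N$, whose dual is $\Hom R{\ext dRMR}{D\h n{\fa}N}$; then $\att\gh{n+d}{\fa}MN=\ass\Hom R{\ext dRMR}{D\h n{\fa}N}=\supp\ext dRMR\cap\att\h n{\fa}N$, using reflexivity of the Artinian module $\gh{n+d}{\fa}MN$ (Lemma \ref{enochs}, with $R/\ann\gh{n+d}{\fa}MN$ complete semilocal since it is a quotient of $R/\fb$). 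Your Tor-route can be repaired in the same spirit: since $d=\pd M$, the top Tor is left exact and in fact $\tor dRMK\cong\Hom R{\ext dRMR}K$ (compute both as the kernel of $F_d\otimes K\to F_{d-1}\otimes K$ for a finite free resolution $F_\bullet$ of $M$), so $\att D_\fb\bigl(\tor dRMK\bigr)=\ass\tor dRMK=\supp\ext dRMR\cap\ass K$, and $\ass K=\ass D_\fb\h n{\fa}N=\att\h n{\fa}N$. With those corrections your argument becomes a legitimate variant of the paper's proof; without them the central equalities are unjustified and, as stated, wrong.
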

\begin{proof}
Since $\ext dRM\cdot$ is a right exact $R$-linear covariant functor,we have
$$\gh {n+d}{\fa}MN\cong\ext dRM{\h n{\fa}N}\cong\ext dRMR\otimes_R\h n{\fa}N.$$
Set $\fc=\ann \gh {n+d}{\fa}MN$.  It is clear that $\fb\subseteq\fc$. Therefore $R/\fc$ is a complete semilocal ring. Now, by Lemma \ref{enochs}, \cite[Exercise 7.2.10]{bs} and \cite [VI.1.4 Proposition 10]{bo} we have
\begin{align*}
    \att\gh {n+d}{\fa}MN&=\att DD\gh {n+d}{\fa}MN\\&=\ass D\gh {n+d}{\fa}MN\\&=\ass D(\ext dRMR\otimes_R\h n{\fa}N)\\&=\ass \Hom  R{\ext dRMR}{D\h n{\fa}N}\\&=\supp {\ext dRMR}\cap\ass D\h n{\fa}N\\&=\supp {\ext dRMR}\cap\att DD\h n{\fa}N\\&=\supp {\ext dRMR}\cap\att \h n{\fa}N
\end{align*}
The final assertion follows immediately from the first  equality, \cite[ Lemma 19.1(iii)]{mat} and the fact that
 $ \supp {\ext dRMR}\subseteq \supp M$.
\end{proof}

 By  Corollary \ref{coratt} $\att \gh {n+d}{\fa}MN\subseteq\att \h n{\fa}N$. Next, we give an example to show that this inclusion  may be strict even if  $(R, \fm)$ is a complete regular local ring and $\fa=\fm$. Also, this  example shows that the following theorem of Mafi  is not true.

  \cite[Theorem 2.1]{ma2}:  Let $(R, \fm)$ be a commutative Notherian local ring and $n=\dim N$, $d=\pd  M<\infty$. If $\gh {n+d}{\fm}MN\neq 0$, then
  $$\att \gh {n+d}{\fm}MN=\att \h n{\fm}N.$$
\begin{example}
Let  $(R, \fm)$ be a complete regular local ring of a dimension $n\geq 2$ and assume that $R$ has two  distinct prime ideals $\fp, \fq$ such that $\dim R/\fp=\dim R/\fq=1$. Set $M=R/\fp$ and $N=R/\fp\oplus R/\fq$. Then, by  Theorem \ref{att3},
$$\att \h 1{\fm}N=\{\fp, \fq\}.$$
On the other hand, $\pd M=\dim R -\depth M=n-1$ and $\dim N=1$. Now, by Theorem \ref{att6},
$$\att \gh n{\fm}MN=\supp M\cap\att\h 1{\fm}N=\{\fp\}.$$
    Therefore \cite[Theorem 2.1]{ma2} is not true. Also, by \cite[Proposition 7.2.11]{bs},
  $$\surd(Ann \gh n{\fm}MN=\bigcap_{\fp\in\att \gh n{\fm}MN}\fp=\fp$$ and $$\surd(Ann \h 1{\fm}N)=\bigcap_{\fp\in\att \h 1{\fm}N}\fp=\fp\cap\fq.$$
Hence, again, Corollary 2.2 and Corollary 2.3 of  \cite{ma2}  are not true. We note that, the other results of \cite{ma2} are concluded from \cite[Theorem 2.1, Corollary 2.2 and Corollary 2.3]{ma2}.
    \end{example}

It is known that if $(R, \fm)$ is a local ring and $\dim M=n>0$, then $\h n{\fm}M$ is not finite \cite[Corollary 7.3.3]{bs}. It was proved in  \cite[Proposition 2.6]{gc}  that if $d=\pd M<\infty$ and $0<n=\dim N$, then  $\gh {n+d}{\fm}MN$ is not finite whenever it is non-zero. Next, we provide a generalization of this result. The following lemma, which is needed in the proof of the next proposition, is elementary.
  \begin{lemma}\label{att4}
    Let $X$ be an $R$-module. Then $X$ has finite length if and only if  $X$ is Artinian and $\att X \subseteq \max R$. Moreover if $X$ has finite length, then $\att X= \supp X=\ass X$.
  \end{lemma}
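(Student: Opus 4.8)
The plan is to prove the equivalence by establishing the two implications separately and then deriving the displayed equalities; the one substantial external ingredient will be the identity $\sqrt{\ann_R X}=\bigcap_{\fp\in\att X}\fp$, which holds for any $R$-module possessing a secondary representation (see \cite[Proposition 7.2.11]{bs}), together with the elementary structure theory of Artinian rings and the fact that the nilradical of a Noetherian ring is nilpotent.

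First I would treat the implication ``$X$ Artinian and $\att X\subseteq\max R$ $\Rightarrow$ $X$ has finite length''. Since $X$ is Artinian it admits a secondary representation, so by the cited identity $\sqrt{\ann X}=\bigcap_{\fp\in\att X}\fp$ is a finite intersection of maximal ideals; hence $R/\sqrt{\ann X}$ is a finite direct product of fields. As $R$ is Noetherian, the ideal $\sqrt{\ann X}/\ann X$ is the nilradical of $R/\ann X$ and is therefore nilpotent, so $S:=R/\ann X$ is a Noetherian ring of Krull dimension zero, i.e. an Artinian ring. Now $X$ is naturally an Artinian $S$-module; filtering it by the powers of the (nilpotent) Jacobson radical $J$ of $S$ presents $X$ as an iterated extension of the modules $J^iX/J^{i+1}X$, each of which is an Artinian module over the semisimple ring $S/J$ and hence of finite length. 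Therefore $\ell_S(X)<\infty$, which is the same as $\ell_R(X)<\infty$.

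Next I would handle the converse together with the ``moreover'' clause. If $\ell_R(X)<\infty$ then $X$ is in particular Artinian, and since $X$ is finitely generated, $R/\ann X$ embeds into a finite direct sum of copies of $X$ and so is an Artinian ring; consequently $\supp X=V(\ann X)$ is a finite set consisting of maximal ideals. For the three equalities: $\att X\subseteq\supp X$ because every attached prime contains $\ann X$; $\ass X\subseteq\supp X$ always, and conversely each $\fp\in\supp X$ is maximal, hence minimal in $\supp X$, hence associated to the finitely generated module $X$, giving $\supp X=\ass X$; finally, applying \cite[Proposition 7.2.11]{bs} once more, $\supp X=V(\sqrt{\ann X})=\bigcup_{\fp\in\att X}V(\fp)$, so each (maximal) member of $\supp X$ must coincide with one of the finitely many attached primes, whence $\supp X\subseteq\att X$. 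Combining these gives $\att X=\supp X=\ass X$.

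I do not expect a genuine obstacle in this lemma; the only point that must not be glossed over is that Artinianness by itself does not force finite length --- for instance $E(R/\fm)$ is Artinian but, when $R$ is a domain that is not a field, it is $(0)$-secondary, so $\att E(R/\fm)=\{(0)\}\not\subseteq\max R$. Thus the hypothesis $\att X\subseteq\max R$ is genuinely used, and the cleanest way to exploit it is precisely the reduction, via the radical-of-annihilator formula, to a module over an Artinian ring.
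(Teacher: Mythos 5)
Your proof is correct, and there is nothing in the paper to compare it against: the authors state this lemma without proof, calling it elementary, so your write-up simply supplies the omitted argument. Both halves check out. For the implication ``Artinian with $\att X\subseteq \max R$ implies finite length'' you correctly use that a representable module satisfies $\sqrt{\ann X}=\bigcap_{\fp\in\att X}\fp$ (the same fact, \cite[Proposition 7.2.11]{bs}, that the paper itself invokes in its Example), so that $R/\ann X$ is a zero-dimensional Noetherian, hence Artinian, ring; the filtration of $X$ by powers of its nilpotent Jacobson radical then gives finite length, and the identification of $R$-length with $R/\ann X$-length is harmless since the submodule lattices agree. For the converse and the equalities, the embedding $R/\ann X\hookrightarrow X^{n}$ for a finite generating set, the resulting finiteness and maximality of $\supp X=V(\ann X)$, the inclusions $\att X\subseteq\supp X$ (attached primes contain $\ann X$), $\supp X=\ass X$ (minimal primes of the support of a finite module are associated), and $\supp X\subseteq\att X$ via $V(\ann X)=\bigcup_{\fp\in\att X}V(\fp)$ together with maximality are all correct. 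Your closing observation that the hypothesis $\att X\subseteq\max R$ cannot be dropped, illustrated by $E(R/\fm)$ over a local domain that is not a field being Artinian but $(0)$-secondary, is also accurate and a worthwhile remark.
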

  \begin{proposition}\label{att5}
  Let $d=\pd M<\infty$, $0<n=\dim N<\infty$. If  $\gh {n+d}{\fa}MN\neq0$, then it is not finite.
  \end{proposition}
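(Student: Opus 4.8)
The plan is to deduce the statement from Theorem \ref{att3}, the dimension bound in \cite[Lemma 5.1]{b}, and the elementary Lemma \ref{att4}, with essentially no computation. Write $X=\gh{n+d}{\fa}MN$. By Theorem \ref{att3}, $X$ is Artinian and, being nonzero, admits a minimal secondary representation, so $\att X\neq\emptyset$; moreover Theorem \ref{att3} gives
$$\att X=\{\fp\in\ass N\mid \cd{\fa}M{R/\fp}=n+d\}.$$

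Next I would verify that no element of $\att X$ is maximal. Fix $\fp\in\att X$. Then $\fp\in\ass N$, so $V(\fp)=\supp(R/\fp)\subseteq\supp N$ and hence $\dim R/\fp\leq\dim N=n$. On the other hand, by \cite[Lemma 5.1]{b},
$$n+d=\cd{\fa}M{R/\fp}\leq \pd M+\dim R/\fp=d+\dim R/\fp,$$
so $\dim R/\fp\geq n$. Therefore $\dim R/\fp=n>0$, which forces $R/\fp$ not to be a field, i.e. $\fp\notin\max R$. Consequently $\att X\nsubseteq\max R$.

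Finally, by Lemma \ref{att4} an Artinian module whose attached primes are not all maximal is not of finite length; hence $X$ is Artinian but not of finite length. A finitely generated module over the Noetherian ring $R$ is Noetherian, and a module that is both Noetherian and Artinian has finite length; so $X$ cannot be finitely generated. This proves the proposition.

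I do not anticipate a genuine obstacle here: the argument is a bookkeeping combination of results already established. The only substantive ingredients are Theorem \ref{att3} (Artinianness of the top module together with the exact description of its attached primes), the colength bound $\cd{\fa}MY\leq\pd M+\dim Y$ from \cite{b}, and Lemma \ref{att4}; the hypothesis $n>0$ enters exactly once, to conclude that a prime $\fp$ with $\dim R/\fp=n$ is non-maximal.
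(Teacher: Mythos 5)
Your proposal is correct and follows essentially the same route as the paper: Theorem \ref{att3} gives Artinianness and the description of $\att\gh{n+d}{\fa}MN$, the bound $\cd{\fa}M{R/\fp}\leq\pd M+\dim R/\fp$ together with $\dim R/\fp\leq n$ forces $\dim R/\fp=n>0$ for every attached prime, and Lemma \ref{att4} then rules out finite length (hence finiteness). You merely spell out two steps the paper leaves implicit, namely why $\dim R/\fp=n$ and why an Artinian non-finite-length module cannot be finitely generated.
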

  \begin{proof}
   Assume that $\fp\in\att \gh {n+d}{\fa}MN$. By \ref{att3},   $\gh {n+d}{\fa}MN$  is an Artinian $R$-module and $n+d=\cd {\fa}M{ R/{\fp}}= \pd M+\dim { R/{\fp}}$. Therefore $\dim R/{\fp}=n>0$;  So that  $\att \gh {n+d}{\fa}MN\varsubsetneq\max R$. It follows that, in view of \ref{att4}, $\gh {n+d}{\fa}MN$ is not finite.
  \end{proof}

  %-----------------------------------------------------------------

\end{document}